\newtheorem{thm}{\textbf{Theorem}}
\newtheorem{definition}{\textbf{Definition}}
\newtheorem{lem}{\textbf{Lemma}}
\newtheorem{proposition}{\textbf{Proposition}}
\newtheorem{assumption}{\textbf{Assumption}}
\newtheorem{remark}{\textbf{Remark}}
\newtheorem{example}{\textbf{Example}}
\DeclareMathOperator*{\argmax}{argmax}
\newcommand{\CVaR}{\mathrm{CVaR}}
\newcommand{\VaR}{\mathrm{VaR}}
\begin{document}

\title{\vspace{-2.5cm} On the Maximization of Long-Run Reward CVaR for Markov Decision Processes}

\author{{Li Xia$^{1,2}$}, \ {Zhihui Yu$^1$}, \ {Peter W. Glynn$^3$} \\
\small \vspace{-0.4cm}
    {$^1$School of Business, Sun Yat-sen University, Guangzhou, China}\\
\small \vspace{-0.4cm}
    {$^2$Guangdong Province Key Laboratory of Computational Science, Guangzhou, China}\\
    \small \vspace{-0.4cm}
    {$^3$Department of Management Science and Engineering, Stanford University, CA 94305, USA}}
\date{}
\maketitle
\begin{abstract}
This paper studies the optimization of Markov decision processes
(MDPs) from a risk-seeking perspective, where the risk is measured
by conditional value-at-risk (CVaR). The objective is to find a
policy that maximizes the long-run CVaR of instantaneous rewards
over an infinite horizon across all history-dependent randomized
policies. By establishing two optimality inequalities of opposing
directions, we prove that the maximum of long-run CVaR of MDPs over
the set of history-dependent randomized policies can be found within
the class of stationary randomized policies. In contrast to
classical MDPs, we find that there may not exist an optimal
stationary deterministic policy for maximizing CVaR. Instead, we
prove the existence of an optimal stationary randomized policy that
requires randomizing over at most two actions. Via a convex
optimization representation of CVaR, we convert the long-run CVaR
maximization MDP into a minimax problem, where we prove the
interchangeability of minimum and maximum and the related existence
of saddle point solutions. Furthermore, we propose an algorithm that
finds the saddle point solution by solving two linear programs.
These results are then extended to objectives that involve
maximizing some combination of mean and CVaR of rewards
simultaneously. Finally, we conduct numerical experiments to
demonstrate the main results.
\end{abstract}
\textbf{Keywords:} Markov decision process; risk-seeking; CVaR
maximization; minimax theorem; saddle point problem

\section{Introduction}
Risk plays a significant role in decision-making of operations and
management. Consider a decision-maker who is managing an enterprise
that is exposed to risk from systemic factors that cannot be
directly controlled through the actions taken by the enterprise
itself. For example, the enterprise may be significantly impacted by
climate change disruptions, supply chain breakdowns due to a
pandemic or tsunami, or even the possibility of war. Given the
uncontrollable nature of these impacts, the decision-maker might
then reasonably choose to maximize the long-run average profit per
unit time, while disregarding small probability of negative outcomes
that are likely induced by uncontrollable factors. This leads, in
the setting of sequential decision-making, to the consideration of a
Markov decision process (MDP) in which the decision-maker seeks to
maximize the long-run average value of the conditional expectation
$\mathbbm E[R_t | R_t \ge q_t]$, where $R_t$ is the reward generated
in period $t$ and $q_t$ is the $\alpha$'th quantile of $R_t$ (with
$\alpha$ being chosen small). The absolute value of this conditional
expectation coincides with the conditional value-at-risk (CVaR) of
the random variable $R_t$. The central contribution of this paper is
the basic theory and algorithms intended to address this problem of
maximizing long-run average CVaR of rewards, from a risk-seeking
viewpoint.

This MDP optimization criterion is a special type of risk-sensitive
measure. In the related literature, \cite{howard1972} were the first
to study risk-sensitive MDPs. In contrast to classical risk-neutral
MDPs that handle expected discounted or average rewards,
risk-sensitive MDPs reflect the attitude of a decision-maker to
risk, either through risk aversion or through risk seeking.
Risk-sensitive MDPs also have close connections with safe or robust
control in engineering, since incorporation of risk into a problem
formulation can enhance the safety or robustness of systems
\citep{chow2015,Garcia95,Lim13,Wachi20}.

Much of the work on risk-sensitive MDPs focuses on either expected
utility criteria or probability criteria. One version of the
expected utility criterion uses an exponential utility function to
measure the risk of accumulated costs; see for instance,
\cite{howard1972} for discrete-time MDPs and \cite{guo2019} for
continuous-time MDPs. The risk probability criterion aims to
minimize the probability of discounted or average cost that does not
exceed a given target value, examples can be referred to
\cite{huo2017,white1993,wu1999}, etc. Risk-sensitivity also arises
in the setting of variance-related MDP optimality criteria, where
the objective can be to minimize the variance subject to mean
optimality or to minimize metrics that combine the mean and
variance. These variance-related MDP problems have been widely
studied in the literature. Efficient computational algorithms
continue to be actively explored in this setting. Audience can refer
to \cite{sobel1982,sobel1994,xia2016,xia2020} and the references
therein.

CVaR is a risk metric that is widely used within the finance and
engineering fields to address downside risk. It is a coherent risk
measure that is computationally tractable in the setting of
single-stage decision problems; see \cite{Rockafellar00}.
Unfortunately, in the sequential decision-making context, the
dynamic programming principle that leads to conventional Bellman
optimality equations fails when either minimizing or maximizing
CVaR. However, the pioneering paper of \cite{bauerle2011}
investigates a discrete-time MDP with a discounted CVaR minimization
criterion. By using state augmentation, they convert the discounted
CVaR MDP into an ordinary MDP with an enlarged state space. The
existence of an optimal Markov deterministic policy for
finite-horizon problem and an optimal stationary deterministic
policy for infinite-horizon problem is proved under
continuity-compactness conditions. By following this state
augmentation idea, many other works, such as
\cite{Haskell15,huang2016,Miller17,Ugurlu17}, have extended the
results of \cite{bauerle2011} to various other MDP settings.

Although the method of state augmentation can transform the
discounted CVaR MDP to a standard MDP, its computation is
intractable caused by continuous state space. Recently, with the
development of reinforcement learning, approximate algorithms are
proposed to optimize the discounted CVaR MDPs by using neural
networks and gradient-based optimization
\citep{chow2014,stanko2019,tamar2015}, which suffer from the
trapping into local optima and slow convergence rates. Another new
way to handle this computing problem is from sensitivity-based
optimization \citep{xia2022}, where a pseudo CVaR definition and
bilevel MDP formulation are proposed and a policy iteration type
algorithm is developed to fast find local optima. Nevertheless,
efficient computation for solving CVaR MDPs is an ongoing and
challenging research area.

In the literature on risk-sensitive MDPs, most works focus on the
perspective of risk averse. However, risk seeking is also an
important feature that a decision-maker may present, such as
casino-goers. On the other hand, according to the \emph{prospect
theory}, a decision-maker usually presents duality of risk
disposition: People are risk averse in the gain frame (positive
prospect), preferring a sure gain to a speculative gamble, but are
risk seeking in the loss frame (negative prospect), tending to
choose a risky gamble rather than a sure loss \citep{Kahneman1979}.
Therefore, it is meaningful to study the optimization problem with a
risk-seeking preference \citep{Armstrong19}. Our motivation also
comes from the management perspective discussed earlier, in which
one is managing an enterprise in the presence of small probability
externalities that may carry large consequences. Such a problem
might arise in managing an endowment for a university. In that
setting, the endowment manager may wish to maximize the expected
return above a nominal payoff level, that is, above the
$\alpha$-quantile of the annual returns distribution. Although
risk-seeking and risk-averse MDPs have commons, such as the same
challenge caused by the failure of dynamic programming principle,
they have essential differences which make the analysis and
optimization approaches different. We aim to study the long-run CVaR
maximization criterion in MDPs from a risk-seeking perspective,
which is not studied in the literature. Our investigation can also
complement the theoretical framework for risk-sensitive MDPs.


In this paper, our objective is to find an optimal policy, among all
history-dependent randomized policies, that maximizes the CVaR of
instantaneous rewards over an infinite horizon. Considering that the
average CVaR of rewards may not be well defined under some
history-dependent randomized policies, we define two related
quantities: limsup CVaR and liminf CVaR, which correspond to the
long-run CVaR in terms of best case and worst case, respectively. We
give numerical examples that show that the limsup and liminf CVaR of
a history-dependent randomized policy may be different. For
maximizing CVaRs, we establish two optimality inequalities to prove
that the limsup optimum is exactly equal to the liminf optimum and
can be attained within the class of stationary randomized policies.

It is well known that stationary deterministic policies preserve
optimality in classical risk-neutral MDPs \citep{Puterman94}. This
optimality also holds for many risk-averse MDPs
\citep{bauerle2011,Haskell15,xia2020,xia2022}. However, in the
risk-seeking setting of this paper, we find that there exist
counterexamples for which no stationary deterministic policy attains
the maximum CVaR. We show in this paper that there always exists an
optimal stationary randomized policy that requires at most one
randomization, i.e., a policy requiring randomization over at most
two actions. By using an alternative convex optimization
representation of CVaR, we prove that the long-run CVaR maximization
MDP within the class of stationary randomized policies can be
transformed into a saddle point problem of bilevel MDPs with a
minimax form. With the use of the \emph{von Neumann minimax
theorem}, we prove the interchangeability of the minimum and maximum
and establish the existence of saddle point solutions. Furthermore,
we devise an approach that transforms the saddle point problem into
two linear programs by using the special structure of the
convex-concave function. The optimality and complexity analysis of
this algorithm is also discussed. We further extend all the results
to a general scenario of mean-CVaR optimization where the long-run
mean and CVaR are maximized simultaneously. Not surprisingly, we
observe that our long-run CVaR MDP degenerates into an ordinary
long-run average MDP when the probability level is set at 0.
Finally, we conduct numerical experiments that illustrate our main
results.

The contributions of this paper are three-fold. First, we study the
long-run CVaR maximization MDP from a risk-seeking perspective. We
propose a very general MDP problem setting over history-dependent
randomized policies and prove the optimality of stationary
randomized policies. This complements a more complete theoretical
framework on risk-sensitive MDPs with CVaR metrics. Second, we
discover that the optimality of stationary deterministic policies
does not hold for maximizing CVaR, which is contrary to the
optimality of deterministic policies widely existing in risk-neutral
and risk-averse MDPs. We further prove the existence of an optimal
stationary randomized policy that requires at most one
randomization. Third, we convert the long-run CVaR maximization MDP
into a minimax problem and propose an algorithm to efficiently
compute the saddle point solution via linear programming.

The rest of the paper is organized as follows. In
Section~\ref{sec:pro}, we rigorously define our long-run CVaR
optimality criterion for MDPs. In Section~\ref{sec:exi}, we
investigate the structural properties of this long-run CVaR
maximization MDP, including the optimality of stationary randomized
policies and the transformation to a minimax saddle point problem.
In Section~\ref{sec:alg}, we propose a linear programming approach
to solve the CVaR maximization MDP and study its algorithmic
properties. In Section~\ref{sec:exten}, we extend our results to the
mean-CVaR maximization of MDPs. Numerical studies to illustrate our
main results are conducted in Section~\ref{sec:exp}. Finally, we
conclude the paper and discuss some future research topics in
Section~\ref{sec:con}.

\section{Preliminaries and Definitions}\label{sec:pro}
In this section, first we introduce the preliminaries about CVaR
metrics and MDPs. Then we give a fairly rigorous definition of
long-run CVaR optimality criterion in discrete-time MDPs.

\subsection{Conditional Value-at-Risk (CVaR)}\label{sec:cvr}
CVaR is a risk measure that corresponds to the conditional
expectation of losses (or gains) exceeding a given value-at-risk
($\VaR$). CVaR was originally proposed in finance and has been
widely used in other fields, such as energy, manufacturing, and
supply chains \citep{Asensio16,Xie18}. Let $\xi$ be a real-valued
and bounded-mean random variable with cumulative distribution
function $F(z) = \mathbbm P(\xi \leq z)$. The $\VaR$ of $\xi$ at
probability level $\alpha \in (0,1)$ is also called the
$\alpha$-quantile, i.e.,
\begin{equation}\label{VaR}
    \nonumber
    \VaR_{\alpha}(\xi) := \inf\left\{z\in \mathbbm{R}:F(z)\ge \alpha\right\} =: F^{-1}_{\xi}(\alpha).
\end{equation}

\begin{figure}[htbp]
\centering
\includegraphics[width=.65\columnwidth]{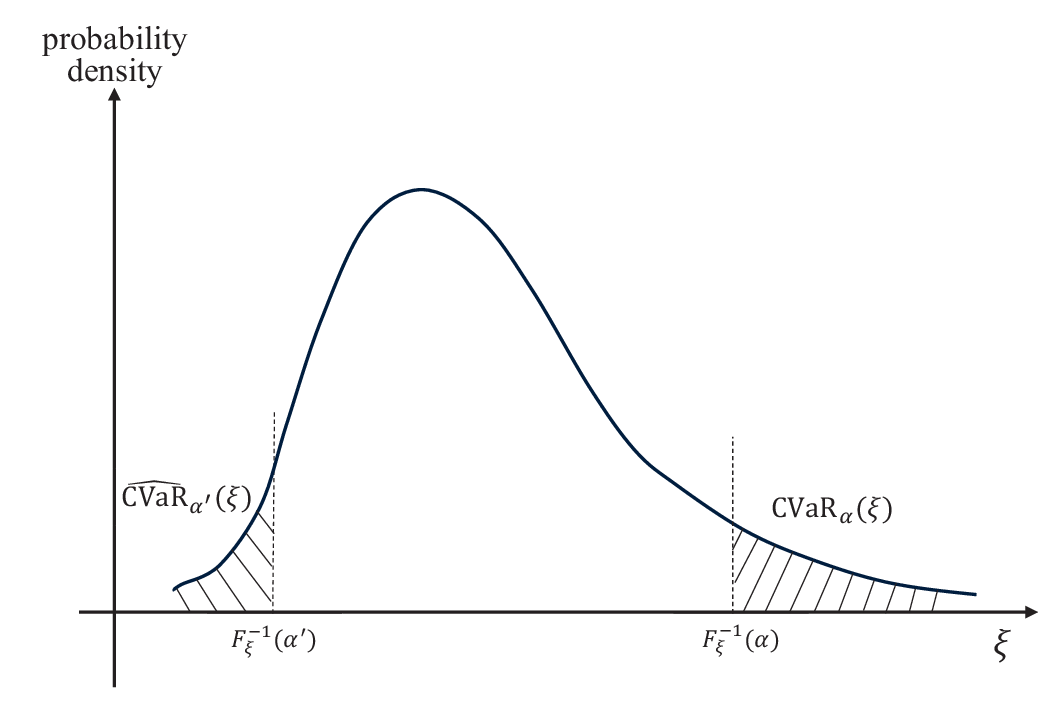}
\caption{Illustration of right-tailed and left-tailed definitions of
CVaR.}\label{fig_CVaR}
\end{figure}

In this paper, the $\CVaR$ of $\xi$ at probability level $\alpha$ is
defined as the expectation of the $(1-\alpha)$-tail distribution of
$\xi$, i.e.,
\begin{equation}\label{eq_CVaR}
    \CVaR_{\alpha}(\xi) := \frac{1}{1-\alpha}\int_{\alpha}^1 \VaR_{q}(\xi)d{q}.
\end{equation}
As illustrated by Fig.~\ref{fig_CVaR}, (\ref{eq_CVaR}) is a
right-tailed definition of CVaR, and there also exists another
definition of CVaR focusing on the left-tailed distribution, i.e.,
\begin{equation}
    \widehat{\CVaR}_{\alpha}(\xi) := \frac{1}{\alpha} \int_0^{\alpha}
    \VaR_{q}(\xi)d{q}. \nonumber
\end{equation}
Obviously, we can derive the following relations
\begin{eqnarray}
\CVaR_{\alpha}(\xi) = -\widehat{\CVaR}_{1-\alpha}(-\xi), \nonumber\\
(1-\alpha)\CVaR_{\alpha}(\xi) + \alpha \widehat{\CVaR}_{\alpha}(\xi)
= \mathbbm E[\xi]. \nonumber
\end{eqnarray}
Therefore, maximizing (or minimizing) the right-tailed CVaR is
equivalent to minimizing (or maximizing) the left-tailed CVaR. In
the rest of this paper, we limit our discussion on the maximization
of the right-tailed CVaR defined in (\ref{eq_CVaR}).

Moreover, \cite{Rockafellar02} discovered that $\CVaR$ is equivalent
to solving a convex optimization problem as follows.
\begin{equation}
    \nonumber
    \CVaR_\alpha(\xi)=\min_{y\in\mathbbm{R}}\left\{y+\frac{1}{1-\alpha}\mathbbm{E}[\xi-y]^+\right\},
\end{equation}
where $[\xi-y]^+=\max\left\{\xi-y,0\right\}$, and
$y^*=\VaR_{\alpha}(\xi)$ exactly attains the above minimum. Note
that, if $\xi$ has lower bound $L$ and upper bound $U$, we can
further specify the domain $y \in \mathbbm{R}$ to a bounded set
$[L,U]$, i.e.,
\begin{equation}\label{equ:CVaR}
    \CVaR_\alpha(\xi)=\min_{y\in[L,U]}\left\{y+\frac{1}{1-\alpha}\mathbbm{E}[\xi-y]^+\right\}.
\end{equation}

As aforementioned, we focus on the maximization of CVaR in this
paper, which reflects the risk-seeking preference of decision
makers. Risk-averse optimization has been widely studied in the
literature. Meanwhile, risk seeking is also an important attitude of
decision makers \citep{Armstrong19}, but much less research
attention has been paid to on this topic. According to the prospect
theory \citep{Kahneman1979}, decision makers usually present
risk-seeking attitude in the loss frame (negative prospect). As
illustrated in Fig.~\ref{fig_riskseeking}, people tend to choose a
risky gamble (Case A) rather than a sure loss (Case B). Obviously,
Case~A has a larger value of CVaR. Such kind of risk-seeking
behaviors can also be found among casino-goers who pursue extreme
rewards although the expectation is negative. Even in reinforcement
learning, an optimization goal including risk-seeking metrics may
enable the algorithm a stronger exploratory capability to find
better solutions \citep{Dilokthanakul18,Mihatsch02}. However, to the
best of our knowledge, there is no literature on the risk-seeking
optimization of CVaR in dynamic scenarios. Thus, it is of
significance to study the maximization of CVaR in MDPs.

\begin{figure}[htbp]
\centering
\includegraphics[width=.95\columnwidth]{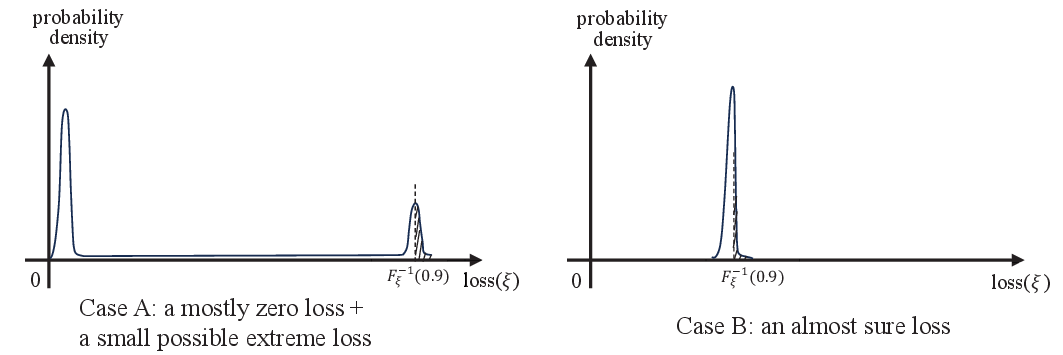}
\caption{Risk-seeking attitude in losses according to the prospect
theory, where Case~A is preferred with a larger CVaR risk
value.}\label{fig_riskseeking}
\end{figure}


\subsection{Markov Decision Process}\label{sec:mdp}
A discrete-time average MDP can be denoted by a tuple $\mathcal M :=
\langle \mathcal S, \mathcal A, (\mathcal A(s), s \in \mathcal{S}),
\bm P, \bm r \rangle$, where $\mathcal S$ and $\mathcal A$ represent
the finite state and action spaces, respectively; $\mathcal A(s)$ is
the set of the admissible actions at state $s$ and we have
$\bigcup\limits_{s \in \mathcal S} \mathcal A(s) = \mathcal A$; $\bm
P$ denotes the Markov kernel, and its element $P(\cdot|s,a)$ is a
probability measure on $\mathcal S$ for each given $(s,a) \in
\mathcal K$ where $\mathcal K := \left\{(s,a) : s \in \mathcal S, a
\in \mathcal A(s)\right\}$; and $\bm r : \mathcal K \rightarrow
\mathbbm{R}$ is the reward function with the minimum $L_r$ and the
maximum $U_r$.

The discrete-time MDP evolves as follows. Suppose the system state
is $s_t \in \mathcal S$ at the current time $t$, and an action $a_t
\in \mathcal A(s_t)$ is adopted based on a policy $u$. The system
will receive an instantaneous reward $R_t := r(s_t,a_t)$, and then
move to a new state $s_{t+1} \in \mathcal S$ at the next time $t+1$
according to the transition probability $P(s_{t+1}|s_t,a_t)$. The
policy $u$ prescribes the action-selection rule at each decision
time epoch based on either history or just the current state, where
the former refers to a \emph{history-dependent randomized policy}
while the latter refers to a \emph{Markov randomized policy}.
Specifically, a history-dependent randomized policy is a sequence of
stochastic kernels, i.e., $u:=(u_{t}, t \geq 0)$, where
$u_t(\cdot|h_t)$ is a probability measure on $\mathcal A$ for given
history $h_t := \left\{s_0,a_0,\ldots,s_{t-1},a_{t-1},s_t\right\}$.
If $u_t(\cdot|h_t)=u_t(\cdot|s_t)$, $\forall h_t$, we call $u$ a
Markov randomized policy which only depends on the current state
$s_t$. Furthermore, if $u_t$ is independent of the decision time
$t$, i.e., there exists a stochastic kernel $d$ on $\mathcal A$
given $\mathcal S$ such that $d(\cdot|s_t)=u_t(\cdot|s_t),\forall t
\ge 0$, we call $d^\infty := (d,d,\cdots)$ or simply $d$ a
\emph{stationary randomized policy}. For notational simplicity, we
denote the sets of all the history-dependent randomized policies,
the Markov randomized policies, and the stationary randomized
policies by $\mathbbm U$, $\mathbbm U^{\rm MR}$, and $\mathbbm D$,
respectively.

For each initial state $s \in \mathcal S$ and policy $u \in \mathbbm
U$, by the Theorem of C. Ionescu-Tulcea
\citep[P.178]{Hernandez-Lerma1996}, there exists a unique
probability measure $\mathbbm{P}_{s}^{u}$ on the space of
trajectories of the states and actions such that
$\mathbbm{P}_{s}^{u}(s_0,a_0,s_1,a_1,\ldots)=\delta_s(s_0)u_0(a_0|s_0)\\P(s_1|s_0,a_0)u_1(a_1|s_0,a_0,s_1)\cdots$,
where $\delta$ denotes the Dirac measure.

\subsection{Definition of Long-Run CVaR Criterion}\label{sec:lr_cvr}
In this paper, we focus on the long-run CVaR criterion in
discrete-time MDPs. We give a rigorous definition as follows, where
the probability level $\alpha \in (0,1)$ is assumed fixed.
\begin{definition}\label{def:lrcvr}
For each initial state $s \in \mathcal S$ and policy $u \in \mathbbm
U$, let $R^{s,u}_t$ be the instantaneous reward at time $t$, the
limsup long-run CVaR is defined as
\begin{equation}\label{equ:lrc+}
    \CVaR^u_{+}(s) := \limsup\limits_{T\rightarrow \infty}{\frac{1}{T}{\sum\limits_{t=0}^{T-1}}} \CVaR_{\alpha} \big(R^{s,u}_t\big),
\end{equation}
and the liminf long-run CVaR is defined as
\begin{equation}\label{equ:lrc-}
    \CVaR^u_{-}(s) := \liminf\limits_{T\rightarrow \infty}{\frac{1}{T}{\sum\limits_{t=0}^{T-1}}} \CVaR_{\alpha} \big(R^{s,u}_t\big).
\end{equation}
If it holds that $\CVaR^u_{+}(s)=\CVaR^u_{-}(s)$ for some $u \in
\mathbbm U$ and each $s \in \mathcal S$, the common function is
called the long-run CVaR and denoted by $\CVaR^u(s)$, i.e.,
\begin{equation}\label{equ:lrc}
    \CVaR^u(s) := \lim\limits_{T\rightarrow \infty}{\frac{1}{T}{\sum\limits_{t=0}^{T-1}}} \CVaR_{\alpha} \big(R^{s,u}_t\big).
\end{equation}
For notational simplicity, we denote by $\hat{\mathbbm U}$ the set
of all policies that make (\ref{equ:lrc}) well defined, i.e.,
$\CVaR^u_{+}(s)=\CVaR^u_{-}(s), \forall u \in \hat{\mathbbm U}$.
\end{definition}

\begin{remark}
\rm{To make the dependence on the initial state $s$ explicit, we
write $R^{s,u}_t$ as the per-step reward, which is a bounded-mean
random variable with support on $\left\{{r}(i,a): (i,a) \in \mathcal
K \right\}$ and the corresponding probability distribution is
$\mathbbm{P}^{u,t}_s(i,a) = \mathbbm{P}^u_s(s_t=i,a_t=a)$.
Considering that the limit of (\ref{equ:lrc}) may not exist (see
Example~\ref{exam1} in Section~\ref{sec:exp} for instance), we
derive two related quantities as (\ref{equ:lrc+}) and
(\ref{equ:lrc-}), which is similar to average MDPs
\citep{Puterman94}. Furthermore, (\ref{equ:lrc}) is well defined for
stationary randomized policies $u \in \mathbbm{D}$, as later
Theorem~\ref{thm:main}~illustrates.}
\end{remark}


With the definitions of \eqref{equ:lrc+}-\eqref{equ:lrc}, we define
the corresponding MDP optimization problems for these CVaR metrics,
respectively.
\begin{definition}\label{def:optimal}
We define the following CVaR maximization problems in MDPs
\begin{eqnarray}\label{equ:optp+}
 \CVaR^{u^*_+}_{+}(s) &=& \sup\limits_{u \in \mathbbm U}\CVaR^u_{+}(s), \qquad \forall s \in \mathcal S, \label{def:supmaxoptim} \\
 \CVaR^{u^*_-}_{-}(s) &=& \sup\limits_{u \in \mathbbm U}\CVaR^u_{-}(s), \qquad \forall s \in \mathcal S, \label{def:infmaxoptim} \\
 \CVaR^{u^*}(s) &=& \sup\limits_{u \in \hat{\mathbbm U}}\CVaR^u(s), \qquad \forall s \in \mathcal S, \label{def:maxoptim}
\end{eqnarray}
where $u^*_+$, $u^*_-$, and $u^*$ are called the limsup-optimal,
liminf-optimal, and optimal policies for the long-run CVaR
maximization problem of MDPs, respectively.
\end{definition}
For notational simplicity, we denote
$\CVaR^{*}_{+}(\cdot):=\CVaR^{u^*_+}_{+}(\cdot)$,
$\CVaR^{*}_{-}(\cdot):=\CVaR^{u^*_-}_{-}(\cdot)$, and
$\CVaR^{*}(\cdot):=\CVaR^{u^*}(\cdot)$. Note that, the policy set
$\hat{\mathbbm U}$ is difficult to determine since it requires
$\CVaR^u_{+}(s)=\CVaR^u_{-}(s)$ for any $u \in \hat{\mathbbm U}$.
Fortunately, we further discover that the optima of these three
problems \eqref{def:supmaxoptim}-\eqref{def:maxoptim} are the same
and can be attained by a common stationary randomized policy $d^*
\in \mathbbm D$.

\section{Existence of Optimal Stationary Randomized Policy}\label{sec:exi}

First, we derive the following lemma to show that the optimality of
Markov randomized policies for these long-run CVaR MDP problems.
\begin{lem}\label{lem:markov}
For any given $s \in \mathcal S$, the optima $\CVaR^{*}_{+}(s)$,
$\CVaR^{*}_{-}(s)$, and $\CVaR^{*}(s)$ in
\eqref{def:supmaxoptim}-\eqref{def:maxoptim} can be attained by
Markov randomized policies in $\mathbbm U^{\rm MR}$, respectively.
\end{lem}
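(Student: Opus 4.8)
The plan is to exploit the fact that $\CVaR_\alpha(R^{s,u}_t)$ is a law-invariant functional: it depends on the policy $u$ only through the one-dimensional distribution of the per-step reward $R^{s,u}_t = r(s_t,a_t)$, which in turn is completely determined by the marginal state-action distribution $\mathbbm{P}^{u,t}_s(i,a) = \mathbbm{P}^u_s(s_t=i,a_t=a)$ over the finite set $\mathcal K$. Consequently the entire Ces\`aro sequence $\frac{1}{T}\sum_{t=0}^{T-1}\CVaR_\alpha(R^{s,u}_t)$, and hence its $\limsup$ and $\liminf$, are functions of the family of marginals $(\mathbbm{P}^{u,t}_s)_{t\ge 0}$ alone. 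So to pass from $\mathbbm U$ to $\mathbbm U^{\rm MR}$ it suffices to match these marginals epoch by epoch.

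First I would invoke the classical reduction for finite MDPs (Derman--Strauch; see, e.g., \citep[Theorem~5.5.1]{Puterman94}): for every $u \in \mathbbm U$ and each fixed initial state $s$, there exists a Markov randomized policy $u' \in \mathbbm U^{\rm MR}$ with
\begin{equation}
\mathbbm{P}^{u',t}_s(i,a) = \mathbbm{P}^{u,t}_s(i,a), \qquad \forall (i,a)\in\mathcal K,\ t\ge 0. \nonumber
\end{equation}
Concretely, one sets $u'_t(a|i) = \mathbbm{P}^u_s(s_t=i,a_t=a)/\mathbbm{P}^u_s(s_t=i)$ whenever $\mathbbm{P}^u_s(s_t=i)>0$ and fixes $u'_t(\cdot|i)$ arbitrarily on the null states; an induction on $t$ then verifies that the state marginals, and hence the joint state-action marginals, agree at every epoch.

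The second step is then immediate from law invariance. Since $R^{s,u}_t$ and $R^{s,u'}_t$ share the same distribution for every $t$, we have $\CVaR_\alpha(R^{s,u}_t) = \CVaR_\alpha(R^{s,u'}_t)$ for all $t$, so the finite partial averages coincide for every $T$ and therefore so do their upper and lower limits,
\begin{equation}
\CVaR^{u'}_{+}(s)=\CVaR^{u}_{+}(s), \qquad \CVaR^{u'}_{-}(s)=\CVaR^{u}_{-}(s). \nonumber
\end{equation}
In particular $u\in\hat{\mathbbm U}$ forces $u'\in\hat{\mathbbm U}$ with $\CVaR^{u'}(s)=\CVaR^u(s)$. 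Because $\mathbbm U^{\rm MR}\subseteq\mathbbm U$ already gives one inequality, taking the supremum over $u$ in each identity shows that the three suprema in \eqref{def:supmaxoptim}--\eqref{def:maxoptim} taken over $\mathbbm U$ equal the corresponding suprema restricted to $\mathbbm U^{\rm MR}$, which is exactly the assertion that the optima are attained within $\mathbbm U^{\rm MR}$.

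I expect the only genuine obstacle to be the careful statement and verification of the marginal-matching reduction, in particular the bookkeeping for states reached with probability zero, where the conditional action rule is undefined; this is resolved by an arbitrary but fixed choice on the null states, which alters no marginal and hence leaves every per-step CVaR term unchanged. Everything else follows directly from the observation that $\CVaR_\alpha(R^{s,u}_t)$ is determined solely by the reward distribution at time $t$.
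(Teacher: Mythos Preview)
Your proposal is correct and follows essentially the same route as the paper's proof: both invoke the marginal-matching reduction of \cite[Theorem~5.5.1]{Puterman94} to replace any $u\in\mathbbm U$ by a Markov randomized $u'$ with identical state--action marginals at every epoch, and then conclude equality of the per-step CVaR terms (the paper does this via the representation~\eqref{equ:CVaR}, you via law invariance, which amounts to the same observation). Your added remarks on the construction of $u'_t$ and the handling of null states simply flesh out details that the paper leaves to the cited reference.
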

\begin{proof}
We first prove the statement for $\CVaR^{*}(s)$, other statements
for limsup and liminf optimum can be proved with the same argument.

We fix $s \in \mathcal S$. For each given $u \in \mathbbm U$, using
the property (\ref{equ:CVaR}), we have
\begin{align}\label{equ:l1}
        \nonumber
        \CVaR_{\alpha}\big(R^{s,u}_t\big)&=\min_{y\in \mathbbm{Y}}\left\{y+\frac{1}{1-\alpha}\mathbbm{E}[R^{s,u}_t-y]^+\right\}\nonumber\\
        &=\min_{y \in \mathbbm{Y}}\left\{y+\frac{1}{1-\alpha}\sum\limits_{(i,a)\in
        \mathcal K}\mathbbm{P}^{u,t}_s(i,a)[r(i,a)-y]^+\right\},
\end{align}
where $\mathbbm{Y}:=[L_r,U_r]$. With Theorem 5.5.1 of
\cite{Puterman94}, there exists a policy $u' \in \mathbbm U^{\rm
MR}$ such that the associated Markov chains have the same $t$-step
distribution, i.e.,
\begin{equation}\label{equ:l2}
        \mathbbm{P}^{u',t}_s(i,a) = \mathbbm{P}^{u,t}_s(i,a), \quad\forall t \ge 0, ~ (i,a) \in \mathcal K.
\end{equation}
Substituting (\ref{equ:l1}) and (\ref{equ:l2}) into (\ref{equ:lrc}),
we derive
\begin{equation}\label{equ:13}
    \CVaR^{u'}(s)=\CVaR^u(s),
\end{equation}
whenever the RHS (right-hand-side) of (\ref{equ:13}) is well
defined. Thus, Lemma~\ref{lem:markov} holds.
\end{proof}

With Lemma~\ref{lem:markov}, we can restrict the policy searching
space of \eqref{def:supmaxoptim}-\eqref{def:maxoptim} from $\mathbbm
U$ to $\mathbbm U^{\rm MR}$. By imposing the following assumption of
unichain and aperiodicity, we can further restrict our attention to
stationary randomized policy space $\mathbbm D$.

\begin{assumption}\label{ass:ergo}
For each $d \in \mathbbm{D}$, the Markov chain under policy $d$ is
unichain and aperiodic.
\end{assumption}
Assumption~\ref{ass:ergo} indicates that the Markov chain
$\left\{s_t, ~ t \ge 0\right\}$ tends to be steady as $t\rightarrow
\infty$ \citep{Puterman94}. That is, the limiting distribution of
the discrete-time MDP under policy $d\in \mathbbm{D}$ is well
defined
\begin{equation}\label{equ:steady}
    \pi^d(i,a) := \lim\limits_{t \rightarrow \infty} \mathbbm{P}^{d,t}_s(i,a),\quad\forall s \in \mathcal S, ~ (i,a) \in \mathcal K,
\end{equation}
which also satisfies the following stationary distribution equation
\begin{equation}\label{equ:con}
    \begin{cases}
        \sum\limits_{a \in \mathcal A(j)} \pi^d(j,a) = \sum\limits_{(i,a) \in \mathcal K} P(j \mid i, a) \pi^d(i, a), \quad\forall j \in \mathcal S;\\
        \sum\limits_{(i,a) \in \mathcal K}  \pi^d(i, a) = 1; ~~~~~~~
        \pi^d(i,a) \ge 0, \quad \forall (i,a) \in \mathcal K.
    \end{cases}
\end{equation}
For notational simplicity, we denote $\mathbbm{X}$ as the set of
vectors $\bm x \in \mathbbm{R}^{\lvert \mathcal K \rvert}$
satisfying (\ref{equ:con}), where we replace $\pi^d$ with $x$. We
call $\mathbbm X$ the set of feasible solutions of steady-state
distribution of the MDP, and each $\bm x \in \mathbbm X$ indicates
the steady-state distribution of the MDP under a particular
randomized policy $d \in \mathbbm D$.

Since $R^{s,d}_t$ follows the distribution $\mathbbm{P}^{d,t}_s$,
(\ref{equ:steady}) indicates $R^{s,d}_t
\stackrel{L}{\longrightarrow} R^d$ (convergence in distribution),
where $R^d$ is a random variable defined with support on
$\left\{{r}(i,a): (i,a) \in \mathcal K \right\}$ and corresponding
probability distribution $\bm \pi^d$. Next, we derive
Lemma~\ref{lem:equivalent}  to establish the equivalence between
$\CVaR_\alpha (R^{s,d}_t)$ and $\CVaR_\alpha (R^d)$.
\begin{lem}\label{lem:equivalent}
For each initial state $s \in \mathcal S$ and stationary randomized
policy $d\in \mathbbm{D}$, it holds that
\begin{equation}\label{equ:equivalent}
        \lim\limits_{t\rightarrow \infty}\CVaR_\alpha \big(R^{s,d}_t\big)=\CVaR_\alpha \big(R^d\big).
\end{equation}
\begin{proof}

We use the alternative representation (\ref{equ:CVaR}) of $\CVaR$
to estimate the gap between $\CVaR_\alpha (R^{s,d}_t)$ and
$\CVaR_\alpha (R^d)$.
\begin{align*}
            \bigg\lvert \CVaR_\alpha \big(R^{s,d}_t \big)-\CVaR_\alpha \big(R^d \big) \bigg\rvert
            &= \bigg\lvert \min_{y \in \mathbbm{Y}}\left\{y+\frac{1}{1-\alpha}\mathbbm{E}[R^{s,d}_t-y]^+\right\}-\min_{y \in \mathbbm{Y}}\left\{y+\frac{1}{1-\alpha}\mathbbm{E}[R^d-y]^+\right\}\bigg\rvert \\
            & \leq \frac{1}{1-\alpha}\max\limits_{y\in \mathbbm{Y}} \bigg\lvert \mathbbm{E}[R^{s,d}_t-y]^{+}-\mathbbm{E}[R^d-y]^{+}\bigg\rvert \\
            & =\frac{1}{1-\alpha}\max\limits_{y\in \mathbbm{Y}} \bigg\lvert \sum\limits_{(i,a) \in \mathcal K}[\mathbbm{P}^{d,t}_s(i,a)-\pi^d(i,a)][r(i,a)-y]^{+}\bigg\rvert \\
            & \leq\frac{1}{1-\alpha}\max\limits_{y\in \mathbbm{Y}}  \sum\limits_{(i,a)\in \mathcal K}\big\lvert \mathbbm{P}^{d,t}_s(i,a)-\pi^d(i,a)\big\rvert[r(i,a)-y]^{+}\\
            & \leq \frac{U_r-L_r}{1-\alpha} \sum\limits_{(i,a)\in \mathcal K}\big\lvert \mathbbm{P}^{d,t}_s(i,a)-\pi^d(i,a)\big\rvert,
\end{align*}
where the first and second inequalities follow from
$\lvert\min\limits_{y} f_1(y) - \min\limits_{y} f_2(y) \rvert\le
\max\limits_{y} \lvert{f_1(y) - f_2(y)}\rvert$ and the absolute
value inequality, respectively, and the last inequality is ensured
by the fact $[r(i,a)-y]^{+} \leq U_r-L_r$ for any $y \in
\mathbbm{Y},(i,a) \in \mathcal K$. Thus, by using (\ref{equ:steady})
and the finiteness of $\mathcal K$, we see that the gap in the above
inequality goes to 0 as $t \rightarrow \infty$, and
Lemma~\ref{lem:equivalent} holds.
\end{proof}
\end{lem}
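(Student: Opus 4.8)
The plan is to exploit the convex-optimization representation \eqref{equ:CVaR} of CVaR, which expresses both $\CVaR_\alpha(R^{s,d}_t)$ and $\CVaR_\alpha(R^d)$ as minima over the compact interval $\mathbbm{Y} = [L_r, U_r]$ of functions that differ only through the underlying distribution. Writing $g_t(y) := y + \frac{1}{1-\alpha}\mathbbm{E}[R^{s,d}_t - y]^+$ and $g(y) := y + \frac{1}{1-\alpha}\mathbbm{E}[R^d - y]^+$, I would reduce the task of controlling $\lvert\CVaR_\alpha(R^{s,d}_t) - \CVaR_\alpha(R^d)\rvert = \lvert\min_{y} g_t(y) - \min_{y} g(y)\rvert$ to controlling the uniform gap $\max_{y\in\mathbbm{Y}}\lvert g_t(y) - g(y)\rvert$, via the elementary inequality $\lvert\min_{y} f_1(y) - \min_{y} f_2(y)\rvert \le \max_{y}\lvert f_1(y) - f_2(y)\rvert$, which applies because both functions are continuous and the minima are attained on the compact set $\mathbbm{Y}$.

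The second step is to simplify this uniform gap. Since the linear term $y$ is common to $g_t$ and $g$, it cancels in the difference, leaving $\frac{1}{1-\alpha}\lvert\mathbbm{E}[R^{s,d}_t - y]^+ - \mathbbm{E}[R^d - y]^+\rvert$. Because $R^{s,d}_t$ and $R^d$ share the same finite support $\{r(i,a):(i,a)\in\mathcal{K}\}$, with distributions $\mathbbm{P}^{d,t}_s$ and $\bm{\pi}^d$ respectively, I would expand both expectations as finite sums and rewrite the difference as $\frac{1}{1-\alpha}\big\lvert\sum_{(i,a)\in\mathcal{K}}[\mathbbm{P}^{d,t}_s(i,a) - \pi^d(i,a)][r(i,a)-y]^+\big\rvert$.

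The final step is to bound this uniformly in $y$. Applying the triangle inequality together with the crude bound $[r(i,a)-y]^+ \le U_r - L_r$, valid for every $y\in[L_r,U_r]$ and $(i,a)\in\mathcal{K}$, produces the $y$-free majorant $\frac{U_r-L_r}{1-\alpha}\sum_{(i,a)\in\mathcal{K}}\lvert\mathbbm{P}^{d,t}_s(i,a) - \pi^d(i,a)\rvert$. Because $\mathcal{K}$ is finite and each summand vanishes as $t\to\infty$ by the convergence of the $t$-step distribution to the stationary distribution in \eqref{equ:steady} (guaranteed by the unichain-and-aperiodic Assumption~\ref{ass:ergo}), the whole sum tends to $0$; hence the CVaR gap tends to $0$ and the claimed limit follows.

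I expect the argument to be essentially mechanical once the representation is in place, and the only point demanding care is the use of the bounded-domain form \eqref{equ:CVaR} rather than the unconstrained $\min_{y\in\mathbbm{R}}$ version. Restricting the minimization to $y\in[L_r,U_r]$ is precisely what makes the uniform bound $[r(i,a)-y]^+ \le U_r-L_r$ available and keeps $\max_{y}$ over a compact set; checking that this restriction is legitimate is the content of the bounded version of the convex identity due to \cite{Rockafellar02}, so I would invoke that identity rather than reprove it.
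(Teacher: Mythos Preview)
Your proposal is correct and follows essentially the same approach as the paper's proof: both use the bounded-domain representation \eqref{equ:CVaR}, apply the inequality $\lvert\min_y f_1 - \min_y f_2\rvert \le \max_y\lvert f_1 - f_2\rvert$, expand the expectations over the finite support $\mathcal{K}$, bound $[r(i,a)-y]^+$ by $U_r - L_r$, and conclude via \eqref{equ:steady} and the finiteness of $\mathcal{K}$. Your discussion of why the compact domain $\mathbbm{Y}$ matters is a nice explicit articulation of a point the paper leaves implicit.
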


\begin{remark}
{\rm Lemma~\ref{lem:equivalent} implies an important property of
CVaR: The order of CVaR measure and the limit of $t$ is
interchangeable, i.e.,
\begin{equation}\label{equ:conver}
  \lim\limits_{t\rightarrow \infty}\CVaR_\alpha \big(\xi_t\big)=\CVaR_\alpha \big(\xi \big),
\end{equation}
if $\left\{\xi_t\right\}$ is a series of bounded discrete-valued
random variables with $\xi_t  \stackrel{L}{\longrightarrow} \xi$.
Furthermore, $(\ref{equ:conver})$ also holds if $\xi_t
\stackrel{P}{\longrightarrow} \xi$ (convergence in probability) or
$\xi_t \stackrel{a.s.}{\longrightarrow} \xi$ (almost sure
convergence) since both of them imply $\xi_t
\stackrel{L}{\longrightarrow} \xi$.}
\end{remark}

Below, we give an important consequence of
Lemma~\ref{lem:equivalent}.
\begin{lem}\label{lemma:equi_sta}
For any initial state $s \in \mathcal S$ and stationary randomized
policy $d \in \mathbbm{D}$, the limsup, liminf, and long-run CVaR
defined in (\ref{equ:lrc+})-(\ref{equ:lrc}) all equal the CVaR of
$R^d$, i.e.,
\begin{equation}\label{equ:thm1}
  \CVaR^d_{+}(s)=\CVaR^d_{-}(s) = \CVaR^d(s) = \CVaR_{\alpha} \big(R^d\big), \qquad\forall s \in \mathcal S, ~ d \in \mathbbm D.
\end{equation}
\end{lem}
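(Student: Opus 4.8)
The plan is to reduce the three quantities $\CVaR^d_+(s)$, $\CVaR^d_-(s)$, and $\CVaR^d(s)$ to a single statement about Cesàro averages of the scalar sequence $c_t := \CVaR_\alpha(R^{s,d}_t)$. All of the analytic content has in fact already been extracted in Lemma~\ref{lem:equivalent}, which guarantees that $c_t$ converges to $\CVaR_\alpha(R^d)$ as $t \to \infty$; what remains is only the elementary observation that Cesàro averaging preserves an ordinary limit.

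First I would fix an arbitrary initial state $s \in \mathcal S$ and stationary randomized policy $d \in \mathbbm D$, and abbreviate $c_t := \CVaR_\alpha(R^{s,d}_t)$ and $c := \CVaR_\alpha(R^d)$. By Lemma~\ref{lem:equivalent}, $\lim_{t\to\infty} c_t = c$. Next I would invoke the Cesàro mean theorem: whenever a real sequence $\{c_t\}$ converges to $c$, its running averages converge to the same value,
\begin{equation}
  \lim_{T\to\infty} \frac{1}{T}\sum_{t=0}^{T-1} c_t = c. \nonumber
\end{equation}
Because this limit exists, the $\limsup$ and $\liminf$ in \eqref{equ:lrc+}--\eqref{equ:lrc-} automatically coincide and both equal $c$, so the common value $\CVaR^d(s)$ in \eqref{equ:lrc} is well defined and equals $c = \CVaR_\alpha(R^d)$. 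This is exactly the chain of equalities \eqref{equ:thm1}.

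The only step that needs an argument is the Cesàro claim, and I expect it to be a routine $\epsilon$-splitting estimate rather than a genuine obstacle: given $\epsilon > 0$, choose $N$ with $|c_t - c| < \epsilon$ for all $t \ge N$, decompose $\frac{1}{T}\sum_{t=0}^{T-1}(c_t - c)$ into the finite head $\sum_{t < N}$ and the tail $\sum_{N \le t < T}$, control the head by a quantity of order $1/T$ using the uniform bound $c_t \in [L_r, U_r]$ (valid since $\CVaR_\alpha$ of a random variable supported in $[L_r,U_r]$ again lies in $[L_r,U_r]$), and bound the tail by $\epsilon$. Since the substantive convergence was already secured in Lemma~\ref{lem:equivalent}, no additional difficulty arises. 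As a byproduct, the argument shows that every $d \in \mathbbm D$ belongs to $\hat{\mathbbm U}$, so that \eqref{equ:lrc} is well defined on the stationary class, confirming the assertion made in the remark following Definition~\ref{def:lrcvr}.
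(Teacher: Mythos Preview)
Your proposal is correct and follows essentially the same route as the paper: both invoke Lemma~\ref{lem:equivalent} to obtain $\CVaR_\alpha(R^{s,d}_t)\to\CVaR_\alpha(R^d)$ and then pass to the Ces\`aro average, concluding that the limit in \eqref{equ:lrc} exists and hence coincides with the limsup and liminf. The only difference is that you spell out the Ces\`aro step explicitly (with the $\epsilon$-splitting and the bound $c_t\in[L_r,U_r]$), whereas the paper leaves it implicit.
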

\begin{proof}
Since $\lim\limits_{t\rightarrow \infty}\CVaR_\alpha
\big(R^{s,d}_t\big)=\CVaR_\alpha \big(R^d\big)$, the limit of
(\ref{equ:lrc}) exists and further
\begin{equation*}
 \CVaR^d(s)=\lim\limits_{T\rightarrow \infty}{\frac{1}{T}{\sum\limits_{t=0}^{T-1}}} \CVaR_{\alpha} \big(R^{s,d}_t\big)=\CVaR_\alpha \big(R^d \big).
\end{equation*}
Since $\CVaR^d(s)$ in (\ref{equ:lrc}) is well defined as above, it
implies that $\CVaR^d_{+}(s) = \CVaR^d_{-}(s) = \CVaR^d(s)$. Thus,
the lemma is proved.
\end{proof}


Next, we focus on the optimization analysis of the long-run CVaR
maximization in MDPs, as defined in
\eqref{def:supmaxoptim}-\eqref{def:maxoptim}. We find that the
optimality of stationary randomized policies is guaranteed, as
stated by Theorem~\ref{thm:main}.
\begin{thm}\label{thm:main}
The optima $\CVaR^{*}_{+}(s)$, $\CVaR^{*}_{-}(s)$, and
$\CVaR^{*}(s)$ in \eqref{def:supmaxoptim}-\eqref{def:maxoptim} over
the set of history-dependent randomized policies can be attained by
a stationary randomized policy $d^* \in \mathbbm D$, i.e.,
\begin{equation}\label{equ:obj_fun}
\CVaR^{*}_{+}(s) = \CVaR^{*}_{-}(s) = \CVaR^{*}(s) = \CVaR^{d^*}(s),
\quad \forall s \in \mathcal S,
\end{equation}
where $d^*$ is attained by
\begin{equation}\label{equ:obj_pol}
d^* \in \argmax\limits_{d\in \mathbbm{D}}\CVaR_\alpha \big(R^d\big).
\end{equation}
\end{thm}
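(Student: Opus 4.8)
The plan is to prove (\ref{equ:obj_fun}) by sandwiching all three optima between $\max_{d\in\mathbbm D}\CVaR_\alpha(R^d)$ from below and from above; these are the two optimality inequalities of opposing directions. First I would record the trivial orderings $\CVaR^u_{-}(s)\le\CVaR^u_{+}(s)$ for every $u$, and $\CVaR^{*}(s)\le\CVaR^{*}_{+}(s)$, the latter because $\hat{\mathbbm U}\subseteq\mathbbm U$ with $\CVaR^u(s)=\CVaR^u_{+}(s)$ on $\hat{\mathbbm U}$. The lower bound is then immediate: since $\mathbbm X$ is a compact polytope and, as established below, the map $g(p):=\CVaR_\alpha$ of the reward distributed as $p$ over $\mathcal K$ is continuous, the maximum in (\ref{equ:obj_pol}) is attained by some $d^*\in\mathbbm D$; as $d^*\in\mathbbm D\subseteq\mathbbm U$ and $d^*\in\hat{\mathbbm U}$ by Lemma~\ref{lemma:equi_sta}, each of $\CVaR^{*}_{+}(s),\CVaR^{*}_{-}(s),\CVaR^{*}(s)$ is at least $\CVaR^{d^*}(s)=\CVaR_\alpha(R^{d^*})=\max_{d\in\mathbbm D}\CVaR_\alpha(R^d)$.

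For the reverse (harder) direction it suffices to show $\CVaR^{*}_{+}(s)\le\max_{d\in\mathbbm D}\CVaR_\alpha(R^d)$, since this caps the largest of the three quantities and closes the sandwich. By Lemma~\ref{lem:markov} I may restrict to $u\in\mathbbm U^{\rm MR}$. The key structural observation is that representation (\ref{equ:CVaR}) exhibits $g$ as the pointwise minimum over $y\in\mathbbm Y$ of maps that are affine in $p$; hence $g$ is \emph{concave}, and being a minimum of a jointly continuous family over the compact set $\mathbbm Y$ it is also continuous on the probability simplex. Introducing the time-averaged state--action occupation measure $\bar p_T(i,a):=\frac1T\sum_{t=0}^{T-1}\mathbbm P^{u,t}_s(i,a)$ and applying Jensen's inequality to the concave $g$ gives $\frac1T\sum_{t=0}^{T-1}\CVaR_\alpha(R^{s,u}_t)=\frac1T\sum_{t=0}^{T-1}g(\mathbbm P^{u,t}_s)\le g(\bar p_T)$, so a bound on $g(\bar p_T)$ will control the whole Ces\`aro average.

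It remains to show the limiting occupation measure is feasible. I would pick a subsequence $T_k$ realizing the $\limsup$ defining $\CVaR^u_{+}(s)$ and, by compactness of the simplex over the finite set $\mathcal K$, pass to a further subsequence along which $\bar p_{T_k}\to\bar p$. Summing the one-step state balance $\mathbbm P^u_s(s_{t+1}=j)=\sum_{(i,a)\in\mathcal K}P(j\mid i,a)\mathbbm P^{u,t}_s(i,a)$ over $t<T$ and dividing by $T$ shows that $\bar p_T$ satisfies the flow equations (\ref{equ:con}) up to a boundary error $\frac1T[\mathbbm P^u_s(s_0=j)-\mathbbm P^u_s(s_T=j)]$ of order $1/T$; letting $k\to\infty$ yields $\bar p\in\mathbbm X$. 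Continuity of $g$ then gives $\CVaR^u_{+}(s)\le\lim_k g(\bar p_{T_k})=g(\bar p)\le\max_{x\in\mathbbm X}g(x)=\max_{d\in\mathbbm D}\CVaR_\alpha(R^d)$, using $\mathbbm X=\{\pi^d:d\in\mathbbm D\}$. Taking the supremum over $u$ and combining with the lower bound forces equality throughout, so all three optima coincide with $\CVaR^{d^*}(s)$. I expect the main obstacle to be precisely this upper-bound direction: handling a possibly non-convergent sequence of occupation measures through subsequences and verifying that the boundary terms in the balance equation vanish, so that the limit point is a genuine feasible stationary distribution. By contrast, the concavity-plus-Jensen step, though the conceptual heart of the argument, is routine once $g$ is written via (\ref{equ:CVaR}).
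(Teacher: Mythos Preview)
Your proof is correct, but it follows a genuinely different route from the paper for the upper bound $\CVaR^{*}_{+}(s)\le\max_{d\in\mathbbm D}\CVaR_\alpha(R^d)$. The paper argues by successive interchanges of operations: it pushes $\min_{y\in\mathbbm Y}$ outside the Ces\`aro sum and $\limsup$, then uses $\sup_u\min_y\le\min_y\sup_u$; for each fixed $y$ the inner problem $\sup_{u\in\mathbbm U^{\rm MR}}\limsup_T\frac1T\sum_t\mathbbm E[R^{s,u}_t-y]^+$ is a standard long-run average MDP, so Theorem~8.4.5 of \cite{Puterman94} reduces it to $\max_{d\in\mathbbm D}$; finally the von~Neumann minimax theorem (Lemma~\ref{lem:exist}, proved afterwards) swaps $\min_y\max_d$ back to $\max_d\min_y=\max_d\CVaR_\alpha(R^d)$. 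Your argument instead exploits directly that $g(p)=\min_{y}\{y+\frac{1}{1-\alpha}\sum_{(i,a)}p(i,a)[r(i,a)-y]^+\}$ is concave in $p$, applies Jensen to the time-averaged occupation measure $\bar p_T$, and uses compactness plus the $O(1/T)$ boundary term in the flow balance to land the subsequential limit in $\mathbbm X$. Your route is more self-contained: it needs neither the minimax theorem nor the classical average-reward optimality result, and in fact the occupation-measure balance holds for arbitrary $u\in\mathbbm U$, so even the reduction to $\mathbbm U^{\rm MR}$ via Lemma~\ref{lem:markov} is not strictly required. The paper's route, on the other hand, has the advantage that it simultaneously establishes the saddle-point structure (\ref{eq_minmax}) that drives the algorithmic development in Section~\ref{sec:alg}; with your approach that equivalence would have to be argued separately.
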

\begin{proof}
By using Lemma~\ref{lemma:equi_sta}, we just need to prove
\begin{equation}
    \nonumber
    \CVaR^*(s) = \max\limits_{d\in \mathbbm{D}}\CVaR_\alpha \big(R^d\big),\quad\forall s \in \mathcal S,
\end{equation}
which is equivalent to prove
\begin{equation}\label{equ:bine1}
        \CVaR^*_{+}(s) \le \max\limits_{d\in \mathbbm{D}}\CVaR_\alpha \big(R^d\big),\quad\forall s \in \mathcal S,
\end{equation}
and
\begin{equation}\label{equ:bine2}
        \CVaR^*_{-}(s) \ge \max\limits_{d\in \mathbbm{D}}\CVaR_\alpha \big(R^d\big),\quad\forall s \in \mathcal S.
\end{equation}
First, we prove (\ref{equ:bine2}). Since $\mathbbm{D} \subset
\mathbbm U^{\rm MR}$, we have
\begin{equation}\label{equ:leq5}
    \nonumber
    \CVaR^*_{-}(s) = \sup\limits_{u \in \mathbbm U^{\rm MR}}\CVaR^u_{-}(s) \ge \max\limits_{d \in \mathbbm{D}}\CVaR^d_{-}(s) = \max\limits_{d \in \mathbbm{D}}\CVaR_\alpha \big(R^d \big),
\end{equation}
where the first equality follows from Lemma~\ref{lem:markov} and the
last one follows from (\ref{equ:thm1}).

Next, we prove (\ref{equ:bine1}). For any $u \in \mathbbm U^{\rm
MR}$ and $s \in \mathcal S$, using the property (\ref{equ:CVaR}), we
have
\begin{align}\label{equ:leq}
    \CVaR^u_{+}(s)
    &= \limsup\limits_{T\rightarrow \infty}{\frac{1}{T}{\sum\limits_{t=0}^{T-1}}} \CVaR_{\alpha} \big(R^{s,u}_t\big)  \nonumber\\
    &= \limsup\limits_{T\rightarrow \infty}{\frac{1}{T}{\sum\limits_{t=0}^{T-1}}} \min_{y \in \mathbbm{Y}}\left\{y+\frac{1}{1-\alpha}\mathbbm{E}[R^{s,u}_t-y]^+\right\}  \nonumber\\
    &\le \min_{y \in \mathbbm Y}\limsup\limits_{T\rightarrow \infty}{\frac{1}{T}{\sum\limits_{t=0}^{T-1}}} \left\{y+\frac{1}{1-\alpha}\mathbbm{E}[R^{s,u}_t-y]^+\right\}     \nonumber\\
    &= \min_{y \in \mathbbm Y} \left\{ y+\frac{1}{1-\alpha}\limsup\limits_{T\rightarrow \infty}{\frac{1}{T}{\sum\limits_{t=0}^{T-1}}} \mathbbm{E}[R^{s,u}_t-y]^+ \right\} \nonumber,
\end{align}
which yields
\begin{equation}\label{equ:leq1}
\CVaR^*_{+}(s)=\sup\limits_{u \in \mathbbm U^{\rm MR}}
\CVaR^u_{+}(s) \le \min\limits_{y \in
\mathbbm{Y}}\left\{y+\frac{1}{1-\alpha}\sup\limits_{u \in \mathbbm
U^{\rm MR}}\limsup\limits_{T\rightarrow
\infty}{\frac{1}{T}{\sum\limits_{t=0}^{T-1}}}
\mathbbm{E}[R^{s,u}_t-y]^+\right\},
\end{equation}
where the equality follows from Lemma~\ref{lem:markov} and the
inequality follows from $\sup\min\{\cdot\} \leq \min\sup\{\cdot\}$.
 Note that, the term $\limsup\limits_{T\rightarrow
\infty}{\frac{1}{T}{\sum\limits_{t=0}^{T-1}}}
\mathbbm{E}[R^{s,u}_t-y]^+$ in (\ref{equ:leq1}) is a standard MDP
with average criterion for given $y \in \mathbbm{Y}$. By using the
fact that $\mathbbm D$ remains optimal over $\mathbbm U^{\rm M}$ in
average reward MDPs (refer to Theorem~$8.4.5$ of
\citet{Puterman94}), we directly have
\begin{equation*}\label{equ:leq2}
\sup\limits_{u \in \mathbbm U^{\rm MR}}\limsup\limits_{T\rightarrow
\infty}{\frac{1}{T}{\sum\limits_{t=0}^{T-1}}}
\mathbbm{E}[R^{s,u}_t-y]^+
 = \max\limits_{d \in \mathbbm{D}}\lim\limits_{T\rightarrow \infty}{\frac{1}{T}{\sum\limits_{t=0}^{T-1}}} \mathbbm{E}[R^{s,d}_t-y]^+
 = \max\limits_{d \in \mathbbm{D}} \sum\limits_{(i,a) \in \mathcal K} \pi^d(i,a)[r(i,a)-y]^+.
\end{equation*}
Substituting the above equation into \eqref{equ:leq1}, we have
\begin{align}
    \CVaR^*_{+}(s) &\le \min\limits_{y \in \mathbbm{Y}}\left\{y+\frac{1}{1-\alpha}\max\limits_{d \in \mathbbm{D}} \sum\limits_{(i,a) \in \mathcal K} \pi^d(i,a)[r(i,a)-y]^+\right\} \nonumber\\
    &=\min\limits_{y \in \mathbbm{Y}}\max\limits_{d \in \mathbbm{D}}\left\{y+\frac{1}{1-\alpha}\sum_{(i,a)\in \mathcal K}\pi^d(i,a)[r(i,a)-y]^+\right\}.\nonumber
\end{align}
Note that the above $\min\max\{\cdot\}$ is interchangeable by using
Lemma~\ref{lem:exist} shown later, thus
\begin{equation}\label{equ:leq4}
    \nonumber
    \CVaR^*_{+}(s) \le \max\limits_{d \in \mathbbm{D}}\min\limits_{y \in \mathbbm{Y}}\left\{y+ \frac{1}{1-\alpha}\sum\limits_{(i,a) \in \mathcal K} \pi^d(i,a)[r(i,a)-y]^+\right\}=\max\limits_{d \in \mathbbm{D}}\CVaR_\alpha \big(R^d \big).
\end{equation}
The above analysis completes the proof of Theorem~\ref{thm:main}.
\end{proof}

\begin{remark}
{\rm With Lemma~\ref{lem:markov} and Theorem~\ref{thm:main}, the
optimality of Markov randomized policy and stationary randomized
policy is guaranteed, respectively. We can limit our policy
searching space to $\mathbbm U^{\rm MR}$, and eventually to
$\mathbbm D$, which significantly reduces the optimization
complexity. Thus, the original long-run CVaR maximization problems
\eqref{def:supmaxoptim}-\eqref{def:maxoptim} are equivalent to
solving \eqref{equ:obj_pol} over the stationary randomized policy
space $\mathbbm D$.} 
\end{remark}

By using (\ref{equ:CVaR}), we find that solving \eqref{equ:obj_pol}
over $\mathbbm D$ can be transformed to the following mathematical
program
\begin{align}\label{equ:max_CVaR}
    \max\limits_{d\in \mathbbm{D}}\CVaR_\alpha \big(R^d \big)
    &= \max\limits_{d\in \mathbbm{D}}
    \min_{y\in\mathbbm{Y}}\left\{y+\frac{1}{1-\alpha}\mathbbm{E}\big[R^d-y \big]^+\right\}\nonumber\\
    &=\max_{d\in \mathbbm{D}}\min_{y\in\mathbbm{Y}}\sum_{(i,a)\in \mathcal K}\pi^d (i,a)\left\{y+\frac{1}{1-\alpha}[r(i,a)-y]^+\right\}\nonumber\\
    &=\max_{\bm{x}\in \mathbbm{X}}\min_{y\in\mathbbm{Y}}\sum_{(i,a)\in \mathcal K}x(i,a)\left\{y+\frac{1}{1-\alpha}[r(i,a)-y]^+\right\},
\end{align}
where $\mathbbm X$ is determined by (\ref{equ:con}).
For notational simplicity, we define
\begin{equation}\label{equ:spe_form}
    v(\bm{x},y) := \sum_{(i,a)\in \mathcal K}x(i,a)\left\{y+\frac{1}{1-\alpha}[r(i,a)-y]^+\right\}.
\end{equation}
Thus, the CVaR maximization problem $(\ref{equ:max_CVaR})$  can be
simply rewritten as
\begin{equation}\label{equ:saddle}
    \max_{\bm{x} \in \mathbbm{X}}\min_{y\in \mathbbm{Y}} v(\bm{x},y).
\end{equation}
Interestingly, we discover that (\ref{equ:saddle}) is a saddle point
problem by using the \emph{von Neumann minimax theorem}
\citep[Theorem 1.2.3]{Barron2013}, and derive the following lemma

\begin{lem}\label{lem:exist}
There exists at least one saddle point solution of
(\ref{equ:saddle}), and it holds that
    \begin{equation}\label{eq_minmax}
        \max_{\bm{x} \in \mathbbm{X}}\min_{y\in \mathbbm{Y}} v(\bm{x},y)=   \min_{y\in \mathbbm{Y}}\max_{\bm{x} \in \mathbbm{X}} v(\bm{x},y).
    \end{equation}
The common value is denoted as $v^*$. Furthermore, a pair
$(\bm{x}^*,y^*)\in \mathbbm{X}\times \mathbbm{Y}$ is said to be a
saddle point solution if and only if
    \begin{equation}\label{equ:iff}
        \nonumber
        v(\bm{x},y^*)\leq v(\bm{x}^*,y^*)=v^*\leq v(\bm{x}^*,y),\quad \forall (\bm{x},y)\in \mathbbm{X}\times \mathbbm{Y}.
    \end{equation}
\end{lem}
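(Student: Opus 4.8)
The plan is to show that the payoff $v(\bm{x},y)$ defined in (\ref{equ:spe_form}), together with the feasible domains $\mathbbm{X}$ and $\mathbbm{Y}$, satisfies every hypothesis of the von Neumann minimax theorem \citep[Theorem 1.2.3]{Barron2013}. Once the hypotheses are checked, the minimax equality (\ref{eq_minmax}) and the existence of a saddle point follow immediately from the theorem, and the characterization (\ref{equ:iff}) then drops out from the definition of a saddle point combined with the established minimax equality.

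First I would verify the domains. The set $\mathbbm{Y}=[L_r,U_r]$ is a closed bounded interval, hence nonempty, convex, and compact. The set $\mathbbm{X}$ is cut out by the linear equalities and nonnegativity constraints in (\ref{equ:con}), so it is a convex polytope; it is bounded since $0\le x(i,a)\le 1$ follows from $x(i,a)\ge 0$ together with $\sum_{(i,a)\in\mathcal K}x(i,a)=1$, and it is closed as a finite intersection of hyperplanes and half-spaces, hence compact. Nonemptiness is guaranteed because, under Assumption~\ref{ass:ergo}, every $d\in\mathbbm D$ induces a stationary distribution $\bm\pi^d\in\mathbbm X$ through (\ref{equ:steady})--(\ref{equ:con}).

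Next I would establish the concave--convex structure and the continuity of $v$. For fixed $y$, each summand of (\ref{equ:spe_form}) is $x(i,a)$ multiplied by a constant, so $v(\cdot,y)$ is linear in $\bm{x}$, and in particular concave. For fixed $\bm{x}$, the map $y\mapsto[r(i,a)-y]^+=\max\{r(i,a)-y,0\}$ is the pointwise maximum of two affine functions, hence convex in $y$; multiplying by the nonnegative weight $\tfrac{1}{1-\alpha}x(i,a)$ and adding the affine term $x(i,a)\,y$ preserves convexity, and summing nonnegative convex functions over the finite set $\mathcal K$ keeps $v(\bm{x},\cdot)$ convex. Joint continuity of $v$ in $(\bm{x},y)$ is clear since $[\,\cdot\,]^+$ is continuous and $\mathcal K$ is finite. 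With compact convex domains, concavity in $\bm{x}$, convexity in $y$, and continuity, the von Neumann minimax theorem yields (\ref{eq_minmax}) and ensures that the outer optimizers are attained, thereby producing a saddle point; we denote the common value by $v^*$.

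Finally, for the characterization (\ref{equ:iff}) I would argue in the standard way. If $(\bm{x}^*,y^*)$ satisfies the two inequalities, then $\bm{x}^*$ attains $\max_{\bm{x}}v(\bm{x},y^*)$ and $y^*$ attains $\min_{y}v(\bm{x}^*,y)$, so chaining
\[
v(\bm{x}^*,y^*)=\max_{\bm{x}}v(\bm{x},y^*)\ge\min_{y}\max_{\bm{x}}v(\bm{x},y)\ge\max_{\bm{x}}\min_{y}v(\bm{x},y)\ge\min_{y}v(\bm{x}^*,y)=v(\bm{x}^*,y^*),
\]
where the middle inequality is weak duality, forces every inequality to be an equality and shows the common value equals $v(\bm{x}^*,y^*)=v^*$. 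Conversely, taking $\bm{x}^*$ to attain the maximin and $y^*$ to attain the minimax (both attainments available from the previous step), the equality (\ref{eq_minmax}) gives $v(\bm{x},y^*)\le\max_{\bm{x}}v(\bm{x},y^*)=v^*=\min_{y}v(\bm{x}^*,y)\le v(\bm{x}^*,y)$, which are precisely the saddle point inequalities. I do not anticipate a serious obstacle here: the only steps demanding genuine care are confirming the compactness and nonemptiness of $\mathbbm{X}$ and correctly identifying the convexity of $[r(i,a)-y]^+$ in $y$, after which the result is a direct application of the cited minimax theorem.
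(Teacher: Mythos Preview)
Your proposal is correct and follows essentially the same approach as the paper: both verify that $\mathbbm{X}$ and $\mathbbm{Y}$ are compact convex sets and that $v(\bm{x},y)$ is concave in $\bm{x}$ (by linearity) and convex in $y$, then invoke the von Neumann minimax theorem. Your write-up is simply more detailed than the paper's terse version, and you additionally spell out the standard saddle-point characterization argument that the paper leaves implicit.
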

\begin{proof}
Note that both $\mathbbm{X}$ and $\mathbbm{Y}$ are convex, closed
and bounded. In order to apply the von Neumann minimax theorem, we
just need to verify that $v(\bm{x},y)$ is concave in $\bm{x}$ and
convex in $y$. The concavity in $\bm x$ follows directly from the
linearity of $\bm{x}$ in \eqref{equ:spe_form}. The convexity in $y$
is easily verified in \eqref{equ:spe_form} \citep{Rockafellar02}.
Thus, the lemma holds.
\end{proof}

Lemma~\ref{lem:exist} indicates that the long-run CVaR maximization
of MDPs under stationary randomized policies can be reduced to a
solvable saddle point problem \eqref{eq_minmax}, which exactly
motivates the development of optimization algorithms in
Section~\ref{sec:alg}. From the right-hand-side of
\eqref{eq_minmax}, we can see that for any given $y$, the inner
problem $\max\limits_{\bm{x} \in \mathbbm{X}} v(\bm{x},y)$ is
actually a standard MDP. Thus, solving \eqref{eq_minmax} can be
viewed as a \emph{bilevel MDP problem} solving a series of MDPs,
which is computationally intensive. Below, we further discuss the
structural property of optimal policies, which is useful for us to
develop computationally efficient algorithms.

It is well known that the optimality of stationary
\emph{deterministic} policies is guaranteed for classical MDPs with
expected discounted or average criteria \citep{Puterman94}. Such
optimality also holds for some risk-averse criteria including
variance-related criterion \citep{sobel1994,xia2020} and discounted
CVaR criterion \citep{Haskell15,huang2016}. For long-run CVaR
minimization criterion in MDPs, the optimality of deterministic
policies is also proved by \cite{xia2022}. However, this seemingly
universal result cannot be extended to our long-run CVaR
maximization MDP, which is demonstrated by a counterexample, refer
to Example~\ref{exam:not_exist} in Section~\ref{sec:exp}. Thus, we
make the following statement
\begin{proposition}\label{pro:not_exist}
The optimum of the long-run CVaR maximization MDP may not be
attainable by a stationary deterministic policy.
\end{proposition}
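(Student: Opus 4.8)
The plan is to prove the proposition by exhibiting a concrete counterexample, i.e., a single MDP satisfying Assumption~\ref{ass:ergo} for which $\max_{d\in\mathbbm D}\CVaR_\alpha(R^d)$ is strictly larger than the value achieved by every stationary deterministic policy. The conceptual starting point is Theorem~\ref{thm:main} together with \eqref{equ:max_CVaR}, which reduce the entire problem to maximizing the map $\bm x\mapsto\CVaR_\alpha(\bm x)=\min_{y\in\mathbbm Y}v(\bm x,y)$ over the polytope $\mathbbm X$. Since $v(\bm x,y)$ is affine in $\bm x$, this objective is a pointwise minimum of affine functions and hence \emph{concave} (and piecewise linear) in $\bm x$; moreover, under Assumption~\ref{ass:ergo} the extreme points of $\mathbbm X$ correspond to the steady-state vectors of stationary deterministic policies (the standard state--action frequency polytope of average-reward MDP theory). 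Here lies the whole difference with the classical average-reward criterion: the average-reward objective $\sum_{(i,a)}x(i,a)r(i,a)$ is \emph{linear} in $\bm x$, so its maximum is always attained at a vertex of $\mathbbm X$, i.e., by a deterministic policy, whereas the maximum of a concave (even merely piecewise-linear) function over a polytope need not be attained at a vertex and may instead sit at a non-vertex point corresponding to a randomized policy. The task is therefore to engineer an instance in which the concave surface $\CVaR_\alpha(\cdot)$ genuinely bulges above all of its vertex values.

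To do this I would exploit the following \emph{diversification} effect for the right-tailed CVaR. Concavity gives $\CVaR_\alpha(\lambda\mu_1+(1-\lambda)\mu_2)\ge\lambda\CVaR_\alpha(\mu_1)+(1-\lambda)\CVaR_\alpha(\mu_2)$ for reward distributions $\mu_1,\mu_2$, but the essential point is that the left-hand side can \emph{strictly} exceed $\max\{\CVaR_\alpha(\mu_1),\CVaR_\alpha(\mu_2)\}$. This occurs precisely when a policy's high reward is realized with probability below $1-\alpha$, so that its upper $(1-\alpha)$-tail is contaminated by low rewards. For instance, with $\alpha=1/2$ and rewards in $\{0,1,2\}$, let $\mu_1$ put mass $0.4$ on $2$ and $0.6$ on $0$ (so $\CVaR_{1/2}(\mu_1)=1.6$) and let $\mu_2$ be the point mass at $1$ (so $\CVaR_{1/2}(\mu_2)=1$). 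Mixing with weight $\lambda=5/6$ on $\mu_1$ replaces the reward-$0$ mass sitting in the upper tail by the reward-$1$ mass of $\mu_2$, yielding $\CVaR_{1/2}=5/3>1.6$. Thus the mixed distribution strictly dominates both pure ones, and the design principle for the counterexample is to reproduce exactly this contaminated-tail configuration as stationary reward distributions of an MDP.

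Finally I would realize this configuration as an MDP. A single-state model cannot work, since there every deterministic policy yields a degenerate point-mass reward and the action with the largest reward already attains $\CVaR_\alpha=U_r$, the unbeatable maximum; genuine dynamics are needed so that deterministic policies are forced to cycle through states of differing reward and thereby produce the spread laws $\mu_1,\mu_2$ above. I would use a small unichain, aperiodic MDP (a handful of states and two actions) in which one deterministic policy realizes the steady-state reward law $\mu_1$, another realizes $\mu_2$, and the randomized policy recovered from the convex combination $\tfrac{5}{6}\bm x^{d_1}+\tfrac{1}{6}\bm x^{d_2}\in\mathbbm X$ via $d(a|i)=x(i,a)/\sum_{a'}x(i,a')$ realizes the winning mixture. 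One then enumerates the finitely many stationary deterministic policies, computes $\CVaR_\alpha(R^d)$ for each from its stationary distribution, and checks that all of these values remain strictly below the value $5/3$ delivered by the randomized policy; invoking Lemma~\ref{lemma:equi_sta} to identify $\CVaR^d(s)$ with $\CVaR_\alpha(R^d)$ then completes the argument. The main obstacle is precisely this global verification: concavity only guarantees that the interior point beats its two defining vertices, so one must still certify that \emph{no} deterministic policy --- including ones combining actions in ways not anticipated by the $\mu_1/\mu_2$ picture --- matches or exceeds the randomized optimum. Keeping the state and action spaces small enough to enumerate every deterministic policy, while still admitting the required contaminated-tail distributions, is the delicate part of the construction.
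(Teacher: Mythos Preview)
Your approach is correct and aligns with the paper's: Proposition~\ref{pro:not_exist} is established by exhibiting a concrete counterexample (the paper's Example~\ref{exam:not_exist}, a $3$-state, $3$-action MDP with $\alpha=0.7$, where the best deterministic CVaR is $92.6675$ but a one-randomization policy achieves $93.24$). Your proposal supplies considerably more conceptual scaffolding---the concavity of $\bm x\mapsto\min_y v(\bm x,y)$ over $\mathbbm X$, the ``contaminated upper tail'' mechanism, and the explicit distributional calculation showing $\CVaR_{1/2}(\tfrac{5}{6}\mu_1+\tfrac{1}{6}\mu_2)=5/3>1.6=\max\{\CVaR_{1/2}(\mu_1),\CVaR_{1/2}(\mu_2)\}$---whereas the paper simply presents numerical data and verifies it; but the underlying strategy is the same, and you correctly identify the only real obstacle (global enumeration of all deterministic policies) that the paper handles by brute-force evaluation.
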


In order to further explore the structural property of optimal
policies, we introduce the concept of the ``number of
randomizations" to measure the randomness of policies.
\begin{definition}\label{def:random}
The number of randomizations of state $i \in \mathcal S$ under
policy $d \in \mathbbm{D}$ is $m(i,d)$, if there are exactly
$m(i,d)+1$ actions $a$ in $\mathcal A(i)$ such that $d(a|i)>0$.
Further, the number of randomizations under policy $d$ is defined as
    \begin{equation}
    \nonumber
        n(d):=\sum_{i \in \mathcal S} m(i,d).
    \end{equation}
\end{definition}
Definition~\ref{def:random} was originally proposed by
\cite{Altman1999} to study the structural property of optimal
policies in constrained MDPs. We show the existence of an optimal
stationary randomized policy that requires at most one
randomization, as Theorem~\ref{thm:random} states.
\begin{thm}\label{thm:random}
There exists an optimal stationary randomized policy $d^* \in
\mathbbm{D}$ of the long-run CVaR maximization MDP such that the
number of randomizations under $d^*$ is at most one, i.e., $n(d^*)
\le 1$.
\end{thm}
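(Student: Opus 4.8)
The plan is to leverage the saddle-point reformulation already in hand and then run a vertex-counting argument in the spirit of \cite{Altman1999}. By Theorem~\ref{thm:main} it suffices to exhibit an optimal $\bm{x}^* \in \mathbbm{X}$ of \eqref{equ:saddle} whose associated policy randomizes at most once, since each $\bm{x} \in \mathbbm{X}$ corresponds to a stationary randomized policy via $x(i,a) = \pi^d(i)\,d(a\mid i)$. First I would invoke Lemma~\ref{lem:exist} to fix a saddle point $(\bm{x}^*, y^*)$ with common value $v^*$. The crucial observation is that a point $\bm{x} \in \mathbbm{X}$ is CVaR-optimal precisely when two conditions hold simultaneously: (i) $\bm{x}$ maximizes the \emph{linear} functional $v(\cdot, y^*)$ over $\mathbbm{X}$ (so that $v(\bm{x}, y^*) = v^*$), and (ii) $y^*$ is a minimizer of the convex map $y \mapsto v(\bm{x}, y)$, i.e. $\min_{y\in\mathbbm{Y}} v(\bm{x},y) = v(\bm{x},y^*)$. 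Condition (i) alone is an ordinary linear program whose extreme optimizers are \emph{deterministic} policies; by Proposition~\ref{pro:not_exist} these need not be CVaR-optimal, and the reason is precisely that such a vertex may violate (ii). Thus condition (ii) is exactly the extra structure that forces (and limits) the randomization.

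The next step is to encode condition (ii) as a single linear equality. Writing $A(\bm x)=\sum_{r(i,a)<y^*} x(i,a)$, $B(\bm x)=\sum_{r(i,a)=y^*} x(i,a)$, and $C(\bm x)=\sum_{r(i,a)>y^*} x(i,a)$, the subdifferential of the piecewise-linear integrand gives $0 \in \partial_y v(\bm{x}, y^*)$ iff $A(\bm x) - \tfrac{\alpha}{1-\alpha}C(\bm x) + c\,B(\bm x) = 0$ for some $c \in [-\tfrac{\alpha}{1-\alpha}, 1]$, which is the familiar $\VaR$ (quantile) condition at level $\alpha$. Since $\bm{x}^*$ satisfies this, there is a multiplier $c^* \in [-\tfrac{\alpha}{1-\alpha},1]$ with $A(\bm{x}^*) - \tfrac{\alpha}{1-\alpha}C(\bm{x}^*) + c^* B(\bm{x}^*) = 0$. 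Define the single linear constraint $\Phi(\bm{x}) := A(\bm{x}) - \tfrac{\alpha}{1-\alpha}C(\bm{x}) + c^* B(\bm{x}) = 0$. A short check shows that \emph{any} $\bm{x}$ with $\Phi(\bm{x})=0$ satisfies (ii): the endpoints of $\partial_y v(\bm{x},y^*)$ are $-B(\bm x)(c^*+\tfrac{\alpha}{1-\alpha}) \le 0$ and $B(\bm x)(1-c^*) \ge 0$, so $0$ lies in the subdifferential. I would then consider the linear program $\max\{v(\bm{x}, y^*) : \bm{x} \in \mathbbm{X},\ \Phi(\bm{x})=0\}$; it is feasible (it contains $\bm{x}^*$) and its optimal value equals $v^*$, attained at some extreme point $\hat{\bm{x}}$. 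Because $\hat{\bm{x}}$ satisfies both (i) and (ii), it is CVaR-optimal.

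It remains to bound $n(\hat{\bm{x}})$. Let $T$ be the support of $\hat{\bm{x}}$ and $S_+$ the set of states carrying positive stationary mass. Restricting the defining equalities to the coordinates in $T$, the balance equations over the single recurrent class contribute rank $|S_+|-1$, the normalization contributes one more, and $\Phi$ contributes one more, so the active equality system restricted to $T$ has rank at most $|S_+|+1$. Since $\hat{\bm{x}}$ is an extreme point, its positive coordinates are uniquely pinned down by these constraints, forcing $|T| \le |S_+| + 1$. As each recurrent state $i$ contributes $m(i,\hat d) = |\{a : \hat x(i,a)>0\}| - 1$ randomizations and transient states can be made deterministic, we get $n(\hat{\bm{x}}) = |T| - |S_+| \le 1$, which is the claim (with $K=1$ coupling constraint yielding at most one randomization, exactly as in the constrained-MDP theory of \cite{Altman1999}).

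The main obstacle I anticipate is the careful treatment of ties at $r(i,a)=y^*$: a naive choice such as fixing $\sum_{r(i,a)>y^*}x(i,a)$ does \emph{not} in general preserve condition (ii), and one must use the multiplier $c^*$ from the saddle point to build the \emph{single} equality $\Phi$ that does so — fixing two tail masses would only yield $n \le 2$. A secondary technical point requiring care is the rank bookkeeping in the last step, namely justifying that the balance equations restricted to the recurrent support have rank exactly $|S_+|-1$ (which uses Assumption~\ref{ass:ergo}) and that normalization and $\Phi$ remain independent of them, so that the count gives $|S_+|+1$ rather than more.
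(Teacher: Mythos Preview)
Your argument is correct and follows the same overall strategy as the paper: fix a saddle point $(\bm{x}^*,y^*)$ of \eqref{eq_minmax}, encode the requirement that $y^*$ remain the inner minimizer as extra linear constraints on $\bm{x}\in\mathbbm{X}$, pass to an extreme point of the resulting polytope, and apply the Altman-style support count. The difference lies in how condition~(ii) is encoded. The paper writes down the two quantile conditions $\mathbbm{P}(R^{d}\le y^*)\ge\alpha$ and $\mathbbm{P}(R^{d}\le y^*-\delta)<\alpha$ that pin $y^*$ as the $\VaR$, handling the strict inequality through a slack variable $x_0$ that is guaranteed positive at the optimum; this yields two added constraints, one of which is then ``refunded'' by $x_0>0$ in the count. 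You instead encode $0\in\partial_y v(\bm{x},y^*)$ directly as the single equality $\Phi(\bm{x})=0$ via the multiplier $c^*$ inherited from $\bm{x}^*$, and verify that this one equality already forces $y^*$ to be a minimizer for any feasible $\bm{x}$. Your route is cleaner (no open constraint $x_0>0$, no discretization step $\delta$) and makes the ``$K$ constraints $\Rightarrow$ $K$ randomizations'' mechanism from \cite{Altman1999} transparent; the paper's route has the minor advantage that its constraints are stated purely in terms of tail probabilities, without an auxiliary multiplier. Your rank bookkeeping is also handled correctly: restricting to the support $T$, the balance rows for transient states vanish, so the active equality system has rank at most $|S_+|+1$ and hence $|T|\le |S_+|+1$, giving $n(\hat d)\le 1$ after assigning deterministic actions on transient states.
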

\begin{proof}
Suppose $(\bm{x}^*,y^*)$ is a saddle point of (\ref{eq_minmax}) and
let $d^* \in \mathbbm{D}$ be the corresponding optimal policy.
Noting that $y=\VaR_\alpha(\xi)$ attains the minimum of
(\ref{equ:CVaR}), we can see that $y^*=\VaR_\alpha(R^{d^*})$. By the
definition of $\VaR$, we have
\begin{equation}\label{equ:VaR_Cd}
 y^*=\inf \left\{z \in \mathbbm{R}: \mathbbm {P}(R^{d^*} \le z) \ge
 \alpha\right\}.
\end{equation}
(\ref{equ:VaR_Cd}) implies two facts:
\begin{equation}\label{equ:cons1}
 \mathbbm {P}(R^{d^*} \le y^*) \ge \alpha,
\end{equation}
\begin{equation}\label{equ:cons2}
 \mathbbm {P}(R^{d^*} \le y^{*-}) < \alpha.
\end{equation}
Obviously, $\VaR_\alpha(R^{d^*})$ must belong to the set
$\left\{r(i,a):(i,a) \in \mathcal K \right\}$. Thus, $y^* \in
\left\{r(i,a):(i,a) \in \mathcal K \right\}$. We sort
$\left\{r(i,a):(i,a) \in \mathcal K \right\}$ in ascending order and
denote the minimum distance between any two adjacent rewards
$r(i,a)$'s as $\delta$. Since $R^{d^*}$ is a discrete random
variable, (\ref{equ:cons2}) is equivalent~to
\begin{equation}\label{equ:cons3}
  \mathbbm {P}(R^{d^*} \le y^*-\delta) < \alpha.
\end{equation}
Now, we represent (\ref{equ:cons1}) and (\ref{equ:cons3}) in
expectation forms by using indicator function as below
\begin{equation}\label{equ:cons11}
 \nonumber
   \sum\limits_{(i,a) \in \mathcal K}x^*(i,a)\mathbbm{1}\left\{r(i,a) \le y^*\right\} \ge \alpha,
\end{equation}
\begin{equation}
 \nonumber
   \sum\limits_{(i,a) \in \mathcal K}x^*(i,a)\mathbbm{1}\left\{r(i,a) \le y^*-\delta\right\} < \alpha.
\end{equation}
Given $y^*$, the optimal policy $d^*$ (corresponding to $\bm x^*$)
can be regarded as an optimal solution of the following linear
program
\begin{align}\label{equ:cop}
&\max_{\bm{x}} \quad \sum_{(i,a)\in \mathcal K}x(i,a)\left\{y^*+\frac{1}{1-\alpha}[r(i,a)-y^*]^+\right\} \nonumber\\
& \begin{array}{r@{\quad}l@{}l@{\quad}l}
s.t.& -\sum\limits_{(i,a) \in \mathcal K}x(i,a)\mathbbm{1}\left\{r(i,a) \le y^*\right\} \le -\alpha\\
&\sum\limits_{(i,a) \in \mathcal K}x(i,a)\mathbbm{1}\left\{r(i,a) \le y^*-\delta\right\}+x_0 = \alpha \\
& \bm{x} \in \mathbbm{X}, ~ x_0 > 0,
\end{array}
\end{align}
where $x_0$ is a slack variable. 
The structure of the optimal solution can be analyzed from the
perspective of linear programming as follows.

First, Theorem~\ref{thm:main} implies that linear program
(\ref{equ:cop}) is feasible. It is worth noting that linear program
(\ref{equ:cop}) contains at most $\lvert \mathcal S \rvert + 2$
independent constraints. This implies that there exists an optimal
solution $\tilde{\bm x}^*:=\left\{\bm{x}^*, x^*_0\right\}$ for this
linear program that has at most $\lvert \mathcal S \rvert + 2$
non-zero elements. Recall that $x^*_0$ must be positive when it
attains optimum, it follows that $\bm {x}^*$ has at most $\lvert
\mathcal S \rvert + 1$ non-zero elements. Based on the assumption of
ergodic property, we also have $\sum\limits_{a \in \mathcal
A(i)}{x}^*(i,a)>0$ for each $i \in \mathcal S$. Thus, there exists
at most one state $i_0 \in \mathcal S$ such that ${x}^*(i_0,a_1)>0$
and ${x}^*(i_0,a_2)>0$ for two different actions $a_1,a_2 \in
\mathcal A(i_0)$. Suppose $d^*$ is the optimal policy corresponding
to this $\bm x^*$. According to Definition~\ref{def:random}, the
number of randomizations under $d^*$ is at most one.
\end{proof}

Note that the necessity of the randomization of optimal policies can
be understood to increase the diversity of system behaviors, which
is better for risk seeking. This randomization will vanish if we
consider CVaR minimization instead of maximization, since
deterministics is better for reducing risk. As a consequence of
Theorem~\ref{thm:random}, the number of randomizations under $d^*$
must be $0$ or $1$. When $n(d^*)=0$, it indicates stationary
deterministic policies can preserve the optimality, which brings
advantages for algorithmic study since the searching space is
finite. When $n(d^*)=1$, we have to consider stationary randomized
policies, which brings challenges to algorithms since the searching
space is infinite. In the next section, we discuss how to find the
optimal policy for this long-run CVaR maximization MDP, based on the
special form of the saddle point problem \eqref{eq_minmax} and the
structural properties of optimal policies stated by
Theorems~\ref{thm:main}~and~\ref{thm:random}.


\section{Algorithm}\label{sec:alg}

With the results in the previous section, we see that the long-run
CVaR maximization MDPs \eqref{def:supmaxoptim}-\eqref{def:maxoptim}
are equivalent to \eqref{equ:obj_pol}, further equivalent to the
saddle point problem \eqref{eq_minmax}. For solving a general saddle
point problem, we may use subgradient method to do iterative
approximation (see, for instance, \cite{nedic2009} and the
references therein), but it suffers from local convergence. In this
section, considering the special form of the convex-concave function
$v(\bm{x},y)$ in \eqref{equ:spe_form}, we develop a global algorithm
by formulating (\ref{eq_minmax}) as two linear programs.

With Lemma~{\ref{lem:exist}}, the saddle point
problem~(\ref{eq_minmax}) is equivalent to a general dual pair of
mathematical programs as below \citep{stoer1963}.
\begin{equation}\label{equ:primal MP}
\hspace{-4cm} \mbox{(mathematical program): \hspace{1cm} }
\begin{array}{ll}
    &\min\limits _{y,z_1}\quad z_1 \\
    &   \mbox{s.t.} \left\{
    \begin{array}{lr}
        v(\bm x,y) \leq z_1,\quad\forall \bm x\in \mathbbm{X},\\
        y\in \mathbbm{Y}.
    \end{array}
    \right.
\end{array}
\end{equation}

\begin{equation}\label{equ:dual MP}
\hspace{-4cm} \mbox{(dual program): \hspace{2.8cm} }
\begin{array}{ll}
    &\max\limits _{\bm x,z_2}\quad z_2 \\
    &   \mbox{s.t.} \left\{
    \begin{array}{lr}
        v(\bm x,y) \ge z_2,\quad \forall y\in \mathbbm{Y},\\
        \bm x\in \mathbbm{X}.
    \end{array}
    \right.
\end{array}
\end{equation}
Note that, the two mathematical programs are unsolvable due to
nonlinearity of $v(\bm x,y)$ and infinite constraints. However, the
mathematical program (\ref{equ:primal MP}) and dual program
(\ref{equ:dual MP}) can be reduced to two linear programs with
finite constraints based on the property of convex-concave function
$v(\bm x,y)$ in \eqref{equ:spe_form}.

\begin{lem}\label{lem:piecewise}
    The function $y\rightarrow v(\bm x,y)$ is piecewise linear with endpoints $\left\{r(i,a):(i,a)\in \mathcal K \right\}$.
\end{lem}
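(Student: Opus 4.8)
The plan is to exploit the explicit additive structure of $v(\bm x,y)$ in \eqref{equ:spe_form}: for fixed $\bm x$ it is a finite sum of functions, each of which is affine away from a single kink located at a reward value $r(i,a)$. First I would separate the globally affine part from the hinge part. Since every $\bm x \in \mathbbm{X}$ satisfies the normalization $\sum_{(i,a) \in \mathcal K} x(i,a) = 1$ coming from \eqref{equ:con}, we may rewrite
\[
v(\bm x, y) = y + \frac{1}{1-\alpha} \sum_{(i,a) \in \mathcal K} x(i,a)\,[r(i,a) - y]^+,
\]
so that the first summand is globally linear in $y$ and contributes no breakpoint.

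Second, I would analyze each hinge term separately. For each fixed $(i,a)$, the map $y \mapsto [r(i,a)-y]^+$ equals $r(i,a)-y$ for $y \le r(i,a)$ and equals $0$ for $y \ge r(i,a)$; it is therefore continuous and piecewise linear with exactly one breakpoint, located at $y = r(i,a)$. Multiplying by the nonnegative constant $x(i,a)/(1-\alpha)$ preserves this piecewise-linear structure (and removes the breakpoint entirely whenever $x(i,a)=0$).

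Third, I would assemble the pieces. Sorting the finitely many distinct values in $\left\{r(i,a):(i,a)\in \mathcal K\right\}$ partitions $\mathbbm{R}$, and in particular $\mathbbm{Y}=[L_r,U_r]$, into finitely many subintervals on each of which every hinge term is affine. Hence the finite sum, and thus $v(\bm x, \cdot)$, is affine on each such subinterval, so $v(\bm x, \cdot)$ is piecewise linear, and any point at which its slope changes must coincide with some $r(i,a)$. This yields the claimed endpoint set.

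I expect no substantive obstacle here: the result is a direct structural consequence of the hinge form $[\,\cdot\,]^+$. The only points requiring care are the use of the normalization $\sum_{(i,a)} x(i,a)=1$ to isolate the linear term $y$, and the observation that the listed rewards give an \emph{upper bound} on the true breakpoint set --- breakpoints at values $r(i,a)$ with $x(i,a)=0$, or at points where two adjacent affine pieces happen to share a slope, need not be genuine kinks --- but this only strengthens the piecewise-linear conclusion rather than threatening it.
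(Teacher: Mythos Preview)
Your argument is correct and is exactly the natural unpacking of the claim. The paper itself states Lemma~\ref{lem:piecewise} without proof, treating it as immediate from the explicit form \eqref{equ:spe_form}; your three-step decomposition (isolate the affine term via the normalization in \eqref{equ:con}, identify each hinge $y\mapsto[r(i,a)-y]^+$ as piecewise linear with a single kink at $r(i,a)$, then sum over the finite index set $\mathcal K$) is precisely the justification the paper leaves implicit.
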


Lemma~\ref{lem:piecewise} indicates that $v(\bm x,y)$ takes minimum
at the endpoint set $\left\{r(i,a) : (i,a) \in \mathcal K \right\}$
for any given $\bm x\in \mathbbm{X}$. Thus, the dual program
(\ref{equ:dual MP}) can be reduced to
\begin{equation}\label{equ:dual LP}
\begin{array}{ll}
    & \max\limits _{\bm x,z_2}\quad z_2 \\
    & \mbox{s.t.} \left\{
    \begin{array}{lr}
        v(\bm x,r(i,a))\ge z_2,\quad\forall (i,a)\in \mathcal K, \\
        x\in \mathbbm{X}.
    \end{array}
    \right.
\end{array}
\end{equation}

On the other hand, it is obvious that the function $\bm x
\rightarrow v(\bm x,y)$ is linear on $\mathbbm{X}$. Recall that
$\mathbbm{X}$ is a bounded convex polyhedron. By the fundamental
theorem of linear programming, each point of $\mathbbm{X}$ can be
represented by the convex-combination of its vertices. Suppose
$\left\{\bm x^{l} : l=1,2,\dots,L \right\}$ is the vertex set, which
also refers to the basic feasible solution (BFS) set of feasible
region $\mathbbm{X}$. Then for each $\bm x\in \mathbbm{X}$, there
exist constants $\left\{\lambda_l : l=1,2,\dots,L \right\}$ with
$0\leq \lambda_l\leq 1$ and $\sum\limits_{l=1}^L \lambda_l=1$ such
that $\bm x=\sum\limits_{l=1}^L \lambda_l \bm x^l$. Thus, the
mathematical program (\ref{equ:primal MP}) can be reduced to
\begin{equation}\label{equ:primal LP}
    \begin{array}{ll}
        &   \min\limits _{y,z_1}\quad z_1 \\
        & \mbox{s.t.} \left\{
        \begin{array}{lr}
            v(\bm x^l,y)\leq z_1,\quad\forall l=1,2,\ldots,L,\\
            y\in \mathbbm{Y}.
        \end{array}
        \right.
    \end{array}
\end{equation}
Taking the specific form of $v(\bm{x},y)$ in (\ref{equ:spe_form}),
(\ref{equ:dual LP}) is obviously a standard linear program with
variables $\bm{x}$ and $z_2$, while (\ref{equ:primal LP}) contains a
nonlinear term $[r(i,a)-y]^+$. It is worth noting that the nonlinear
term can be removed by variable substitution. That is, we introduce
new variables $w(i,a):=[r(i,a)-y]^+$ with added constraints $w(i,a)
\ge 0$, $w(i,a) \ge r(i,a)-y$, $w(i,a) \le
r(i,a)-y+[1-b(i,a)](U_r-L_r)$, $w(i,a) \le b(i,a)(U_r-L_r)$ and
$b(i,a) \in \left\{0,1\right\}$. Through this transformation,
(\ref{equ:primal LP}) is expressed as a mixed-integer linear program
(MILP):
\begin{equation}\label{equ:primal LP11}
\hspace{-1cm} \mbox{(MILP): \hspace{1cm} }
\begin{array}{ll}
    &\min\limits _{y,z_1,\bm{w},\bm{b}}\quad z_1 \\
    &\mbox{s.t.}\quad   \left\{
    \begin{array}{lr}
        \sum\limits_{(i,a)\in \mathcal K} x^l(i,a)\big[y+\frac{1}{1-\alpha}w(i,a)\big]\leq z_1,\quad l=1,2,\ldots,L\\
        w(i,a)\ge r(i,a)-y,\quad\forall (i,a)\in \mathcal K\\
        w(i,a)\ge 0,\quad\forall (i,a)\in \mathcal K\\
        w(i,a) \le r(i,a)-y+[1-b(i,a)](U_r-L_r) ,\quad\forall (i,a)\in \mathcal K\\
        w(i,a) \le b(i,a)(U_r-L_r),\quad\forall (i,a) \in \mathcal K\\
        b(i,a) \in \left\{0,1\right\},\quad\forall (i,a) \in \mathcal K \\
        L_r \leq y \leq U_r
    \end{array}
    \right.
\end{array}
\end{equation}
By noting that the coefficient $\frac{1}{1-\alpha}$ of the nonlinear
term $[r(i,a)-y]^+$ is positive, we can further remove the $0-1$
variables $\left\{b(i,a)\right\}$ as well as the corresponding
constraints in (\ref{equ:primal LP11}), since $w(i,a)$ must take
endpoints $0$ or $r(i,a)-y$ when $z_1$ attains the minimum. Such a
simplification cannot hold when the coefficient is negative. With
this simplification, the original problems \eqref{equ:primal MP} and
\eqref{equ:dual MP} are equivalent to the following two linear
programs:
\begin{equation}\label{equ:primal LP1}
 \hspace{-0.7cm} \mbox{(primal program): \hspace{1cm} }
 \begin{array}{ll}
  &\min\limits _{y,z_1,\bm{w}}\quad z_1 \\
  &\mbox{s.t.}\quad   \left\{
  \begin{array}{lr}
   \sum\limits_{(i,a)\in \mathcal K} x^l(i,a)\big[y+\frac{1}{1-\alpha}w(i,a)\big]\leq z_1,\quad l=1,2,\ldots,L\\
   w(i,a)\ge r(i,a)-y,\quad\forall (i,a)\in \mathcal K\\
   w(i,a)\ge 0,\quad\forall (i,a)\in \mathcal K\\
   L_r \leq y \leq U_r
  \end{array}
  \right.
 \end{array}
\end{equation}

\begin{equation}\label{equ:dual LP1}
\hspace{-0.5cm} \mbox{(dual program): \hspace{-0.1cm} }
\begin{array}{ll}
    &\max\limits _{\bm{x},z_2}\quad z_2 \\
    &\mbox{s.t.}\quad   \left\{
    \begin{array}{lr}
        \sum\limits_{(i,a)\in \mathcal K}x(i,a)\big[r(i_0,a_0)+\frac{1}{1-\alpha}(r(i,a)-r(i_0,a_0))^+\big]\ge z_2, ~~ \forall(i_0,a_0)\in \mathcal K\\
        \sum\limits_{a\in \mathcal A(j)}x(j,a)-\sum\limits_{(i,a)\in \mathcal K}P(j|i,a)x(i,a)=0, \quad \forall j \in \mathcal S \\
        \sum\limits_{(i,a)\in \mathcal K}x(i,a)=1\\
        x(i,a)\ge 0,\quad\forall (i,a)\in \mathcal K\\
    \end{array}
    \right.
\end{array}
\end{equation}

The above analysis shows that the saddle point of (\ref{eq_minmax})
can be computed by solving the linear programs (\ref{equ:primal
LP1}) and (\ref{equ:dual LP1}). Thus, we directly derive the
following Theorem~\ref{thm:com}.
\begin{thm}\label{thm:com}
Suppose the optimal solution of the linear program (\ref{equ:primal
LP1}) is $y^*,\bm z^*_1,\bm{w}^*$ and the optimal solution of the
dual program (\ref{equ:dual LP1}) is $\bm x^*,z_2^*$, then $(\bm
x^*,y^*)$ constructs a saddle point of (\ref{eq_minmax}) with
corresponding value $v^*=z_1^*=z_2^*$. Furthermore, the optimal
stationary randomized policy $d^*$ of the long-run CVaR maximization
MDP is determined by
\begin{equation}\label{equ:optimal}
    \nonumber
 \begin{array}{ll}
d^*(a|i)=     \left\{
    \begin{array}{lr}
    \frac{x^*(i,a)}{\sum\limits_{a' \in \mathcal A(i)}x^*(i,a')}, \quad \text{if} \sum\limits_{a' \in \mathcal A(i)}x^*(i,a')>0 ~\text{for}~ i \in \mathcal{S},\\
        \text{arbitrary probability}, \qquad \text{otherwise}.
    \end{array}
    \right.
\end{array}
\end{equation}
\end{thm}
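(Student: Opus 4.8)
The plan is to treat the two linear programs (\ref{equ:primal LP1}) and (\ref{equ:dual LP1}) as exact reformulations of the mathematical programs (\ref{equ:primal MP}) and (\ref{equ:dual MP}), whose optimal values are $\min_{y\in\mathbbm{Y}}\max_{\bm x\in\mathbbm{X}}v(\bm x,y)$ and $\max_{\bm x\in\mathbbm{X}}\min_{y\in\mathbbm{Y}}v(\bm x,y)$, respectively. By the minimax equality in Lemma~\ref{lem:exist}, these two values coincide and equal the common saddle value $v^*$, so I would first record the immediate consequence $z_1^*=z_2^*=v^*$. It then remains to check that $(\bm x^*,y^*)$ verifies the saddle-point characterization of Lemma~\ref{lem:exist}, i.e. $v(\bm x,y^*)\le v^* \le v(\bm x^*,y)$ for all $(\bm x,y)\in\mathbbm{X}\times\mathbbm{Y}$, and finally to read off the policy $d^*$ from $\bm x^*$.

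I would obtain the left inequality from the primal program (\ref{equ:primal LP1}). Since each slack variable $w(i,a)$ enters the binding constraint with the positive coefficient $\tfrac{1}{1-\alpha}x^l(i,a)$ while $z_1$ is being minimized, at optimality it is driven down to $w^*(i,a)=[r(i,a)-y^*]^+$, which restores the original nonlinear integrand and turns the constraints into $v(\bm x^l,y^*)\le z_1^*$ for every vertex $\bm x^l$ of $\mathbbm{X}$. Because $\bm x\mapsto v(\bm x,y^*)$ is linear on $\mathbbm{X}$ by (\ref{equ:spe_form}) and every $\bm x\in\mathbbm{X}$ is a convex combination of the vertices $\{\bm x^l\}$, the inequality extends to $v(\bm x,y^*)\le z_1^*=v^*$ for all $\bm x\in\mathbbm{X}$, which is exactly $v(\bm x,y^*)\le v^*$.

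For the right inequality I would use the dual program (\ref{equ:dual LP1}), whose first block of constraints is precisely $v(\bm x^*,r(i_0,a_0))\ge z_2^*$ for all $(i_0,a_0)\in\mathcal K$. By Lemma~\ref{lem:piecewise} the piecewise-linear map $y\mapsto v(\bm x^*,y)$ attains its minimum over $\mathbbm{Y}=[L_r,U_r]$ at one of its endpoints $r(i_0,a_0)$, so $\min_{y\in\mathbbm{Y}}v(\bm x^*,y)\ge z_2^*=v^*$, hence $v(\bm x^*,y)\ge v^*$ for all $y$. Evaluating the two inequalities at $(\bm x^*,y^*)$ then forces $v(\bm x^*,y^*)=v^*$; together the three facts are exactly the characterization of Lemma~\ref{lem:exist}, so $(\bm x^*,y^*)$ is a saddle point with value $v^*=z_1^*=z_2^*$.

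The policy recovery is then bookkeeping: the feasibility constraints on $\bm x$ in (\ref{equ:dual LP1}) are the steady-state equations (\ref{equ:con}), so $\bm x^*\in\mathbbm{X}$ is the steady-state distribution of some $d^*\in\mathbbm{D}$, recovered by the normalization stated in the theorem (with the convention that $d^*(\cdot|i)$ is arbitrary on states carrying no mass); by Theorem~\ref{thm:main} this $d^*$ attains $\max_{d\in\mathbbm{D}}\CVaR_\alpha(R^d)$ and is optimal for the long-run CVaR MDP. I expect the only genuinely delicate steps to be the two reductions that let finitely many LP constraints reproduce the continuum of pointwise inequalities on $v$ --- the identity $w^*(i,a)=[r(i,a)-y^*]^+$ at optimality (which reconnects the linearized surrogate to the true objective) and the endpoint-minimization argument via Lemma~\ref{lem:piecewise} --- after which everything closes through the minimax equality of Lemma~\ref{lem:exist}.
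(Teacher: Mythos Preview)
Your proposal is correct and follows essentially the same route as the paper. In fact the paper does not give a separate proof of Theorem~\ref{thm:com} at all: it simply states that the theorem is ``directly derived'' from the preceding reductions of (\ref{equ:primal MP})--(\ref{equ:dual MP}) to the linear programs (\ref{equ:primal LP1})--(\ref{equ:dual LP1}) via Lemma~\ref{lem:piecewise}, the vertex representation of $\mathbbm{X}$, and the minimax equality of Lemma~\ref{lem:exist}; your write-up is a faithful and more explicit unpacking of exactly that argument.
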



With Theorem~\ref{thm:com}, we obtain an algorithm to solve the
long-run CVaR maximization MDP by these equivalent linear programs.
In order to further study the computation complexity of solving
linear programs (\ref{equ:primal LP1}) and (\ref{equ:dual LP1}), we
need to analyze the number of constraints. It is obviously that the
number of constraints of (\ref{equ:dual LP1}) is $\lvert \mathcal S
\rvert + 2\sum\limits_{i \in \mathcal S}\lvert \mathcal A(i)
\rvert+1$, which is linear with $|\mathcal S|$ or $|\mathcal A|$. As
a comparison, the number of constraints of (\ref{equ:primal LP1}) is
directly determined by the number of BFSs of (\ref{equ:con}). It can
be seen from \cite{denardo1970} that each BFS corresponds to a
stationary deterministic policy. Hence, the number of BFSs equals
the number of stationary deterministic policies, i.e.,
$\prod\limits_{i \in \mathcal S }\lvert \mathcal A(i) \rvert$. Thus,
the number of constraints of (\ref{equ:primal LP1}) is
$\prod\limits_{i \in \mathcal S }\lvert \mathcal A(i) \rvert + 2
\sum\limits_{i \in \mathcal S }\lvert \mathcal A(i) \rvert+2$, which
grows exponentially with $|\mathcal S|$ or $|\mathcal A|$.

\begin{remark}
The number of constraints of linear program (\ref{equ:primal LP1})
is exponential while the number of constraints of dual program
(\ref{equ:dual LP1}) is polynomial. It seems time-consuming to solve
both the two linear programs (\ref{equ:primal LP1}) and
(\ref{equ:dual LP1}) when the state and action spaces are large.
However, for the purpose to obtain the optimal policy $\bm x^*$ and
$v^*$, solely solving (\ref{equ:dual LP1}) is enough, which is more
computationally efficient.
\end{remark}

\section{Extensions}\label{sec:exten}
In this section, we first aim to extend our results to the mean-CVaR
maximization in MDPs, where the long-run CVaR and the long-run
average reward are maximized simultaneously. We define a combined
metric as follows.
\begin{align}\label{equ:mean_cvar}
    J^u_\beta(s) := \CVaR^u(s)+\beta \eta^u(s)=
    \lim\limits_{T\rightarrow \infty}{\frac{1}{T}{\sum\limits_{t=0}^{T-1}}} \big[\CVaR_{\alpha} \big(R^{s,u}_t\big) +\beta  \mathbbm{E}\big(R^{s,u}_t\big)\big],
\end{align}
whenever the limit exists. We use a coefficient $\beta \ge 0$ to
balance the weights between the CVaR and the long-run average reward
$\eta^u(s)$. Note that, if $\beta=0$, the combined metric
degenerates into the long-run CVaR. Thus, the mean-CVaR MDP is an
extension of the long-run CVaR MDP. The objective is to find a
policy to maximize $J^u_\beta(s)$, i.e.,
\begin{align}
\nonumber
    J^*_\beta(s)= \sup\limits_{u \in \tilde{\mathbbm
    U}}J^u_\beta(s), \qquad \forall s \in \mathcal S,
\end{align}
where $\tilde{\mathbbm U}$ is the set of policies that make
(\ref{equ:mean_cvar}) well defined.

Using the same argument in Section~\ref{sec:exi}, the mean-CVaR
maximization problem has an optimal stationary randomized policy and
can be reduced to a solvable saddle point problem, i.e.,
\begin{equation}\label{equ:mean_saddle}
J^*_\beta(s) = \max\limits_{d \in
\mathbbm{D}}J^d_\beta(s)=\max_{\bm{x}\in
\mathbbm{X}}\min_{y\in\mathbbm{Y}}\sum_{(i,a)\in \mathcal
K}x(i,a)\left\{y+\frac{1}{1-\alpha}[r(i,a)-y]^+ +\beta
r(i,a)\right\}, ~ \forall s \in \mathcal S.
\end{equation}
In addition, Proposition~\ref{pro:not_exist} and
Theorem~\ref{thm:random} still hold.

Similar to Section~\ref{sec:alg}, we can solve the saddle problem
(\ref{equ:mean_saddle}) by linear programming, just with
$\sum\limits_{(i,a)\in \mathcal K}
x^l(i,a)\big[y+\frac{1}{1-\alpha}w(i,a)+\beta r(i,a)\big]$ in lieu
of $\sum\limits_{(i,a)\in \mathcal K}
x^l(i,a)\big[y+\frac{1}{1-\alpha}w(i,a)\big]$ in (\ref{equ:primal
LP1}) and $\sum\limits_{(i,a)\in \mathcal
K}x(i,a)\big[r(i_0,a_0)+\frac{1}{1-\alpha}(r(i,a)-r(i_0,a_0))^+
+\beta r(i,a)\big]$ in lieu of $\sum\limits_{(i,a)\in \mathcal
K}x(i,a)\big[r(i_0,a_0)+\frac{1}{1-\alpha}(r(i,a)-r(i_0,a_0))^+\big]$
in (\ref{equ:dual LP1}). Thus, all the results in
Sections~\ref{sec:pro}-\ref{sec:alg} can be extended to mean-CVaR
maximization MDPs without additional technical difficulties.

Moreover, we discuss another extension that may unify the long-run
CVaR and long-run average optimization of MDPs. From the definition
of CVaR in \eqref{eq_CVaR} and Fig.~\ref{fig_CVaR}, we derive that
the CVaR of a random variable at probability level $\alpha = 0$
equals the expectation of the random variable, i.e., $\CVaR_0(\xi) =
\mathbbm E[\xi]$, where we define $\VaR_0(\xi):=\inf\{\xi\}$.
Therefore, when $\alpha = 0$, the long-run CVaR of an MDP under a
policy $u$ defined in \eqref{equ:lrc} can be rewritten as
\begin{equation}
\CVaR^u(s) = \lim\limits_{T\rightarrow
\infty}{\frac{1}{T}{\sum\limits_{t=0}^{T-1}}} \CVaR_{\alpha}
\big(R^{s,u}_t\big) = \lim\limits_{T\rightarrow
\infty}{\frac{1}{T}{\sum\limits_{t=0}^{T-1}}} \mathbbm E[R^{s,u}_t]
=: \eta^u(s), \quad s \in \mathcal S, \nonumber
\end{equation}
whenever the limits exist, and $\eta^u(s)$ is the long-run average
of the MDP under policy $u$ with initial state $s$. We directly
derive the following remark.

\begin{remark}
When $\alpha = 0$, the long-run CVaR optimization of MDPs is
equivalent to the long-run average optimization of MDPs, where the
later one is well studied in classical MDP theory. Our main results
in Section~\ref{sec:exi} are consistent with those for long-run
average MDPs. Our LP algorithm in Section~\ref{sec:alg} for solving
long-run CVaR MDPs, such as LP in (\ref{equ:dual LP1}), degenerates
into the LP formulation for average MDPs (refer to Chapter~8.8 of
\cite{Puterman94}). Therefore, these observations provide a
validation of our approaches, and also bring us a unified viewpoint
of long-run CVaR MDPs and long-run average MDPs.
\end{remark}

\section{Numerical Experiments}\label{sec:exp}
In this section, we conduct numerical examples to illustrate our
main results. First, we construct an example similar to Example
8.1.1 of \cite{Puterman94} to illustrate that the limit in
(\ref{equ:lrc}) may not exist under some history-dependent
randomized policies.
\begin{example}\label{exam1}
Consider an MDP with two states and two actions for each state. Let
$\mathcal S = \left\{s_1,s_2\right\}, \mathcal
A(s_1)=\left\{a_{11},a_{12}\right\}, \mathcal
A(s_2)=\left\{a_{21},a_{22}\right\}$, the reward function
$r(s_1,a_{11})=r(s_1,a_{12})=2, r(s_2,a_{21})=r(s_2,a_{22})=-2$ and
the transition probability
$P(s_1|s_1,a_{11})=P(s_2|s_1,a_{12})=P(s_1|s_2,a_{21})=P(s_2|s_2,a_{22})=1$.
Consider a Markov policy $u$ which, on starting in $s_1$, remains in
$s_1$ for one period, proceeds to $s_2$ and remains there for three
periods, returns to $s_1$ and remains there for $3^2$ periods,
proceeds to $s_2$ and remains there for $3^3$ periods, etc.
\end{example}
In this example, the reward $R^{s_1,u}_t$ is a constant for given
time $t \ge 0$ (which also can be regarded as a particular random
variable with Dirac distribution). Thus, the CVaR of $R^{s_1,u}_t$
under probability level $\alpha=0.5$ is
\begin{align}
    \nonumber
    \CVaR_{0.5}(R^{s_1,u}_t)=\min\limits_{y \in \mathbbm{R}}\left\{y+2\mathbbm{E}(R^{s_1,u}_t-y)^+\right\} &=
    \begin{cases}
        2,& \frac{3^{2n}-1}{2} \le t \le \frac{3^{2n+1}-1}{2}-1,\\
        -2,& \frac{3^{2n+1}-1}{2} \le t \le \frac{3^{2n+2}-1}{2}-1.
    \end{cases}
\end{align}
Then direct computation shows that
\begin{equation}
    \nonumber
    \CVaR^{u}_{+}(s_1)=2,\quad \CVaR^{u}_{-}(s_1)=-2.
\end{equation}
Thus, the limit in (\ref{equ:lrc}) does not exist, and the CVaR may
not be well defined for history-dependent randomized policies. It is
necessary to define liminf long-run CVaR and limsup long-run CVaR.

Next, we give Example~\ref{exam:not_exist} to demonstrate that an
optimal stationary deterministic policy may not exist in our
long-run CVaR maximization MDP.

\begin{example}\label{exam:not_exist}
Consider an MDP with state space $\mathcal S=\left\{1,2,3\right\}$
and action space $\mathcal A=\left\{1,2,3\right\}$ with $\mathcal
A(1)= \mathcal A(2)=\mathcal A(3)= \mathcal A$. Suppose the reward
function and the transition function take values as in
Table~\ref{tab:not_exist}. The probability level is set as
$\alpha=0.7$.
        \renewcommand{\arraystretch}{1.1}
    \begin{table}[!htbp]
        \newcommand{\tabincell}[2]{\begin{tabular}{@{}#1@{}}#2\end{tabular}}
        \centering
        \fontsize{4}{7}\selectfont
        \caption{The reward function and the transition function.}\label{tab:not_exist}
        \small
        \begin{tabular}{|p{18mm}<{\centering}|p{12mm}<{\centering}|p{12mm}<{\centering}|p{12mm}<{\centering}|p{12mm}<{\centering}|p{12mm}<{\centering}|p{12mm}<{\centering}|p{12mm}<{\centering}|p{12mm}<{\centering}|p{12mm}<{\centering}|}
            \hline
            state & \multicolumn{3}{c|}{1}&\multicolumn{3}{c|}{2}&\multicolumn{3}{c|}{3}
        \\  \hline
             action & \tabincell{c}{$1$}& \tabincell{c}{$2$} & \tabincell{c}{$3$}
             & \tabincell{c}{$1$}& \tabincell{c}{$2$} & \tabincell{c}{$3$}
             & \tabincell{c}{$1$}& \tabincell{c}{$2$} & \tabincell{c}{$3$} \\
            \hline
            $P(1|\cdot,\cdot)$&0.4688    &0.3564    &0.3991   &0.1083    &0.7012    &0.4370 &0.5457  &0.4102  &0.1460     \\ \hline
            $P(2|\cdot,\cdot)$&0.0741    &0.0857    &0.1457   &0.1839    &0.1863    &0.4373 &0.1834  &0.4357  &0.3986 \\ \hline
            $P(3|\cdot,\cdot)$&0.4571    &0.5579    &0.4552   &0.7078    &0.1124    &0.1257 &0.2709  &0.1541 &0.4554  \\ \hline
            $r(\cdot,\cdot)$&5    &69    &13   &94    &4    &71 &77  &70 &39  \\ \hline
        \end{tabular}
    \end{table}
\end{example}
We use linear program (\ref{equ:dual LP1}) to compute a stationary
randomized policy to maximize the long-run CVaR. The optimal policy
$d^*$ is mixed with $d^*(3|1)=d^*(1|2)=1, d^*(1|3)=0.0255,
d^*(3|3)=0.9745$, and the corresponding $\CVaR^*=93.24$. By
evaluating all stationary deterministic policies, we find the
maximum corresponding $\CVaR=92.6675$, which does not attain the
optimum. Thus, there does not exist an optimal stationary
deterministic policy in this example. In addition, the number of
randomizations under this optimal policy $d^*$ is one, which is
consistent with Theorem~\ref{thm:random}.

Finally, we conduct another example about university endowment funds
to demonstrate the numerical computation of mean-CVaR maximization
MDPs to compute an optimal stationary randomized policy.
\begin{example}\label{exam:endow}
Endowments have been becoming critical support to many
not-for-profit institutions. College and university endowments are
collections of funds that support students, staff, and the
institution's mission. As an example of such a long-run CVaR
maximization MDP, we consider a long-term investment project --- a
university endowment fund. Following \cite{merton1993}, the focus of
university endowment fund management is on determining the optimal
portfolio allocation among traded assets of the university's total
wealth. The endowment needs to be divided between several different
assets, such as stocks and bonds. In order to make proper
allocation, the decision-maker needs to model future returns on
these assets. To simplify the model, we set the economic environment
as $0$ or $1$, which represents a bear market and bull market,
respectively. We assume that the economic environment is dynamic and
obeys the Markov property. The endowment manager needs to choose the
endowment proportion allocated to a riskless asset (bond) and a
risky assets (stock) at each investment time from
$\left\{0.2,0.5,0.8\right\}$ based on the current economic
environment and the current holding proportion. The endowment
manager wishes to maximize the average return and the expected
return above a nominal payoff level simultaneously.
\end{example}

We establish a discrete-time MDP model to study this simplified
endowment management problem. The state space is $\mathcal S =
\left\{(x,\omega)\right\}$, where $x \in \left\{0, 1\right\}$
represents the current economic state and $\omega \in \left\{0.2,
0.5, 0.8\right\}$ denotes the holding proportion of stock. The
remaining wealth will be allocated to the riskless asset, and thus,
$\omega_{0} = 1 - \omega$. The action space is $\mathcal A =
\left\{0.2, 0.5, 0.8\right\}$, which  determines the wealth
proportion allocated to stock. For each $t \ge 1$, the holding
proportion of stock at time $t$ equals the wealth proportion
allocated to stock at time $t-1$, i.e., $\omega_{t} = a_{t-1}$. The
Markov kernel is uniquely determined by the transition probability
of economic environment, which is set as $\mathbbm P(0|0)=0.8,
\mathbbm P(1|0)=0.2, \mathbbm P(0|1)=0.3, \mathbbm P(1|1)=0.7$. The
reward function can be computed by
\begin{equation}
    \tilde r(s, a, s') :=1000[(1 - a)r_0 + a r_1(x')  - b \lvert a - \omega \rvert], \nonumber
\end{equation}
where $1000$ (million) represents the total initial endowment funds,
$r_0 = 0.02$ denotes the fixed rate of return for riskless asset,
$r_1$ denotes the rate of return for stock which depends on the next
economic state with $r_1(0)=-0.05, r_1(1)=0.1$, and $b = 0.005$
denotes the rate of transaction cost per unit wealth.

Note that, the reward function depends on the next state, it is
necessary to do some equivalent conversion. Consider an MDP with
future-state-dependent reward $\tilde{r}: \mathcal S \times \mathcal
A \times \mathcal S \rightarrow \mathbbm{R}$, the long-run CVaR is
defined similarly to Definition~\ref{def:lrcvr} by replacing
$R^{s,u}_t$ with $\tilde{R}^{s,u}_t$, where $\tilde{R}^{s,u}_t$ is a
bounded-mean random variable with support on
$\left\{\tilde{r}(i,a,j): (i,a,j) \in \mathcal K \times \mathcal S
\right\}$ and corresponding probability distribution
$\tilde{\mathbbm{P}}^{u,t}_s(i,a,j) = \mathbbm{P}^u_s(s_t=i,a_t=a,
s_{t+1}=j)$. Using the same argument in Section~\ref{sec:exten},
there exists an optimal stationary randomized policy that maximizes
the combined metric of long-run CVaR and long-run average return,
which can be also transformed to a saddle point problem, i.e.,
\begin{align}\label{equ:max_CVaR1}
J^*_\beta &= \max\limits_{d \in \mathbbm{D}} \big[\CVaR^d + \beta
\eta^d \big] = \max\limits_{d \in \mathbbm{D}} \lim\limits_{T\rightarrow \infty}{\frac{1}{T}{\sum\limits_{t=0}^{T-1}}} \big[\CVaR_{\alpha} \big(\tilde R^{s,d}_t\big) +\beta  \mathbbm{E}\big(\tilde R^{s,d}_t\big)\big] \nonumber\\
    &=
\max_{\bm{x}\in \mathbbm{X}}\min_{y\in\mathbbm{Y}}\sum_{(i,a,j)\in
\mathcal K \times \mathcal
S}x(i,a)P(j|i,a)\left\{y+\frac{1}{1-\alpha}[\tilde{r}(i,a,j)-y]^+ +
\beta \tilde{r}(i,a,j)\right\}.
\end{align}
With (\ref{equ:primal LP1}) and (\ref{equ:dual LP1}), the saddle
point of (\ref{equ:max_CVaR1}) can be solved by the following two
linear programs.

The linear program is to compute the optimal VaR:
\begin{align}\label{equ:primal LP2}
    &\min\limits _{y,z_1,\bm w}\quad z_1 \nonumber\\
    &\mbox{s.t.}\quad   \left\{
    \begin{array}{lr}
        \sum\limits_{(i,a)\in \mathcal K} x^l(i,a)\big[y+\frac{1}{1-\alpha}\sum\limits_{j\in \mathcal S}P(j|i,a)w(i,a,j)+ \beta\sum\limits_{j\in \mathcal S}P(j|i,a)\tilde{r}(i,a,j)\big] \leq z_1,\quad l=1,2,\ldots,L\\
        w(i,a,j)\ge \tilde{r}(i,a,j)-y,\quad\forall (i,a,j)\in \mathcal K \times \mathcal S\\
        w(i,a,j)\ge 0,\quad\forall (i,a,j)\in \mathcal K \times \mathcal S\\
        L_r \leq y \leq U_r.
    \end{array}
    \right.
\end{align}
The dual program is to compute an optimal stationary randomized
policy:
\begin{align}\label{equ:dual LP2}
    &\max\limits _{\bm{x},z_2}\quad z_2 \nonumber\\
    &\mbox{s.t.}\quad   \left\{
    \begin{array}{lr}
        \sum\limits_{(i,a)\in \mathcal K}x(i,a)\big[\tilde{r}(i_0,a_0,j_0)+\sum\limits_{j\in \mathcal S}P(j|i,a)[\frac{1}{1-\alpha}(\tilde{r}(i,a,j)-\tilde{r}(i_0,a_0,j_0))^+ + \beta \tilde{r}(i,a,j)]\big] \ge z_2,\\
~~~~~~~~~~~~~~~~~~~~~~~~~~~~~~~~~~~~~~~~~~~~~~~~~~~~~~~~~~~~~~~~~~~~~~~~~~~~~~~~~~~~\quad \forall(i_0,a_0,j_0)\in \mathcal K \times \mathcal S\\
        \sum\limits_{a\in \mathcal A(j)}x(j,a)-\sum\limits_{(i,a)\in \mathcal K}P(j|i,a)x(i,a)=0, \quad\forall j\in \mathcal S \\
        \sum\limits_{(i,a)\in \mathcal K}x(i,a)=1\\
        x(i,a)\ge 0,\quad\forall (i,a)\in \mathcal  K.\\
    \end{array}
    \right.
\end{align}

We set $\alpha=0.9,\beta=0.5$ and use Matlab to solve the linear
programs (\ref{equ:primal LP2}) and (\ref{equ:dual LP2}) for this
example. The optimal VaR is $y^*=84$ with the corresponding
$\CVaR^*=96.84$, and the optimal policy $d^*$ is given in
Table~\ref{tab:opti_expm2}.

\renewcommand{\arraystretch}{1.1}
\begin{table}[htbp]
    \newcommand{\tabincell}[2]{\begin{tabular}{@{}#1@{}}#2\end{tabular}}
    \centering
    \fontsize{4}{7}\selectfont
    \caption{The optimal policy of mean-CVaR maximization.}\label{tab:opti_expm2}
    \label{tab:performance_comparison}
    \small
    \begin{tabular}{|p{18mm}<{\centering}|p{12mm}<{\centering}|p{12mm}<{\centering}|p{12mm}<{\centering}|p{12mm}<{\centering}|p{12mm}<{\centering}|p{12mm}<{\centering}|}
        \hline
        state & \tabincell{c}{$(0,0.2)$}& \tabincell{c}{$(0,0.5)$} & \tabincell{c}{$(0,0.8)$}
        & \tabincell{c}{$(1,0.2)$}& \tabincell{c}{$(1,0.5)$} & \tabincell{c}{$(1,0.8)$} \\
        \hline
        $d^*(0.2|\cdot)$&1    &0    &1   &0    &0    &0    \\ \hline
        $d^*(0.5|\cdot)$&0    &1    &0    &0   &1    &0   \\ \hline
        $d^*(0.8|\cdot)$&0    &0    &0    &1   &0    &1  \\ \hline
    \end{tabular}
\end{table}

From Table~\ref{tab:opti_expm2}, it is observed that the optimal
policy is deterministic in this example. Combined with
Example~\ref{exam:not_exist}, the optimal policy is either
deterministic or mixed with one randomization, which further
verifies Theorem~\ref{thm:random}.

\section{Conclusion}\label{sec:con}
In this paper, we study the analysis and optimization algorithms for
MDPs with a risk-seeking perspective. The objective is to find an
optimal policy among history-dependent randomized policies to
maximize the long-run CVaR of instantaneous rewards over an infinite
horizon. By establishing two optimality inequalities, we prove the
optimality of stationary randomized policies over the set of
history-dependent randomized policies. We also find that there may
not exist an optimal stationary deterministic policy and further
prove the existence of an optimal stationary randomized policy that
requires at most one randomization. Via an alternative
representation of CVaR with a form of convex optimization, we
convert the long-run CVaR maximization MDP into a minimax
formulation for solving saddle points, which initiates an algorithm
by solving linear programs. An extension to mean-CVaR maximization
MDPs is also discussed. Finally, numerical experiments are conducted
to demonstrate our main results.

One of the future research topics is to deal with the discounted
CVaR MDP, where the objective is to maximize the CVaR of total
discounted rewards over an infinite horizon. It is also desirable to
develop an effective algorithm to solve the discounted CVaR MDP,
which is not reported in the literature yet. Another future topic is
to study risk measures in stochastic games, from one decision-maker
in MDPs to multiple in games. One possible scheme is to study the
long-run CVaR optimality criterion in the framework of two-person
zero-sum Markov games. Moreover, the combination of our results with
techniques of reinforcement learning is also a promising research
direction, which can contribute to develop a framework of
data-driven risk-seeking decision making.

{}

\end{document}